\numberwithin{equation}{section}
\newtheorem{theorem}{Theorem}[section]
\newtheorem{proposition}[theorem]{Proposition}
\newtheorem{lemma}[theorem]{Lemma}
\newtheorem{remark}[theorem]{Remark}
\newtheorem{definition}[theorem]{Definition}
\def\A{\mathcal{A}}
\def\ZZ{\mathbb Z}
\def\de{\delta}
\def\ZZ{\mathbb{Z}}
\renewcommand{\eqref}[1]{{\rm (\ref{#1})}}
\begin{document}

\title[A generalized quantum cluster algebra of Kronecker type]
{A generalized quantum cluster algebra of Kronecker type}

\author{Liqian Bai, Xueqing Chen, Ming Ding and Fan Xu}
\address{Department of Applied Mathematics, Northwestern Polytechnical University, Xi'an, Shaanxi 710072, P.R. China}
\email{bailiqian@nwpu.edu.cn (L.Bai)}
\address{Department of Mathematics,
 University of Wisconsin-Whitewater\\
800 W. Main Street, Whitewater, WI.53190. USA}
\email{chenx@uww.edu (X.Chen)}
\address{School of Mathematics and Information Science\\
Guangzhou University, Guangzhou 510006, P.R.China}
\email{dingming@gzhu.edu.cn (M.Ding)}
\address{Department of Mathematical Sciences\\
Tsinghua University\\
Beijing 100084, P.~R.~China} \email{fanxu@mail.tsinghua.edu.cn(F.Xu)}





\keywords{generalized quantum cluster algebra, cluster multiplication formula, positive basis}

\maketitle

\begin{abstract}
The cluster multiplication formulas  for a generalized quantum cluster algebra of Kronecker type are explicitly given. Furthermore,  a positive bar-invariant $\mathbb{Z}[q^{\pm\frac{1}{2}}]$-basis of this algebra is constructed.
\end{abstract}


\section{Introduction}

Cluster algebras were introduced by Fomin and Zelevinsky \cite{ca1,ca2} in order
to set up an algebraic framework for studying the total positivity and Lusztig's canonical bases.  Quantum cluster algebras, as the quantum deformations of cluster algebras,
were later introduced by Berenstein and Zelevinsky \cite{BZ2005} for studying the dual canonical bases in coordinate rings and their $q$-deformations.
An important feature of (quantum) cluster algebras is the so--called  Laurent phenomenon which says that all cluster variables belong to an intersection of certain (may be infinitely many) rings of Laurent polynomials.

Generalized cluster algebras were introduced by Chekhov and Shapiro \cite{CS} in order to understand the Teichm\"uller theory of hyperbolic orbifold surfaces. The exchange relations for cluster variables of generalized cluster algebras are polynomial exchange relations while the exchange relations for cluster algebras are binomial relations. Generalized cluster algebras also possess the Laurent phenomenon \cite{CS}  and  are studied by many people in a similar way as cluster algebras (see for example \cite{nak2, CL1, CL2, Mou, BCDX-1}).
As a natural generalization of both quantum cluster algebras and generalized cluster algebras,  we defined the generalized quantum cluster algebras \cite{BCDX}. Not surprised that the Laurent phenomenon also holds in  these algebras \cite{BCDX-2}.

One of the most important problems in cluster theory is to  construct cluster multiplication formulas. For acyclic cluster algebras, Sherman and Zelevinsky \cite{SZ} firstly established the cluster multiplication formulas in rank 2 cluster algebras of finite and affine types. Cerulli \cite{CI-1} generalized this result to rank $3$ cluster algebra of affine type $A_{2}^{(1)}$.  Caldero and Keller \cite{ck}  constructed the cluster multiplication formulas  between two cluster characters for simply laced Dynkin quivers, which was generalized to affine types by Hubery in~\cite{Hubery} and to acyclic types by Xiao and Xu in~\cite{XX,X}.  In the quantum case, Ding and Xu \cite{dx} firstly gave the cluster multiplication formulas of the quantum  cluster algebra of Kronecker type.
Recently, Chen, Ding and Zhang \cite{cdz} obtained the cluster multiplication formulas  in the acyclic quantum cluster algebras with arbitrary coefficients through some quotients of derived Hall algebras of acyclic valued quivers.
One of the most powerful tools in cluster theory is the cluster multiplication formulas which are very useful to construct bases of (quantum) cluster algebras with nice properties (see for example \cite{SZ, ck, CI-1, dx, dxc}).  In cluster theory, a basis is called positive if its structure constants are positive.
Several positive bases such as the atomic bases and the triangular bases of some (quantum) cluster algebras have been found (see \cite{qin2, li-pan}).
So far, no similar results have been obtained in generalized  quantum cluster algebras. It becomes natural to think whether one can give an explicit treatment of  the above mentioned problems for  generalized quantum cluster algebras.

In this paper, we study a generalized quantum cluster algebra of Kronecker type denoted by $\A_{q}(2,2)$.
We recall the definition of  generalized quantum cluster algebras in Section 2, provide the cluster multiplication formulas of $\A_{q}(2,2)$ in Section 3, and explicitly construct a positive bar-invariant $\mathbb{Z}[q^{\pm\frac{1}{2}}]$-basis of $\A_{q}(2,2)$ in Section 4.

\section{Preliminaries}
In this section, we mainly review  the definition of generalized quantum cluster algebras~\cite{BCDX}. Throughout this section, $m$ and $n$ are positive integers with $m\geq n$. Let $\widetilde{B}=(b_{ij})$ be an $m\times n$ integer matrix whose upper $n\times n$ submatrix is denoted by $B$ and $\Lambda=(\lambda_{ij})$ a skew-symmetric $m\times m$ integer matrix.
\begin{definition}
The pair $(\Lambda,\widetilde{B})$ is called compatible if for any $1\leq i\leq m$ and $1\leq j\leq n$, we have

\begin{equation}\label{equ1}
\sum^{m}_{k=1}\lambda_{ki}b_{kj}=\left\{
  \begin{aligned}
    d_{j}&,~\text{if}~i=j;  \\
    0&,~\text{otherwise};
  \end{aligned}
  \right.
\end{equation}
for some positive integers $\widetilde{d}_j\ (1\leq j\leq n)$.
\end{definition}

Note that the skew-symmetric matrix $\Lambda$  gives a skew-symmetric bilinear form on $\ZZ^{m}$ defined by $$\Lambda(\mathbf{a},\mathbf{b})=\mathbf{a}^T\Lambda \mathbf{b}$$ for any column vectors $\mathbf{a},\mathbf{b}\in\ZZ^{m}$.

Let $q$ be a formal variable and $\mathbb{Z}[q^{\pm\frac{1}{2}}]\subset \mathbb{Q}(q^{\frac{1}{2}})$  the ring of integer Laurent polynomials in $q^{\frac{1}{2}}$.
One can associate to $(\Lambda, q)$ a quantum torus algebra as follows.
\begin{definition}
The quantum torus $\mathcal{T}$ over $\ZZ[q^{\pm\frac{1}{2}}]$ is generated by the symbols $\{X(\mathbf{a})~|~\mathbf{a}\in\ZZ^{m}\}$ subject to the multiplication relations
\begin{equation}\label{qcommutation}
X(\mathbf{a})X(\mathbf{b})=q^{\frac{1}{2}\Lambda(\mathbf{a},\mathbf{b})}X(\mathbf{a}+\mathbf{b}),
\end{equation}
for any $\mathbf{a},\mathbf{b}\in\ZZ^{m}$.
\end{definition}
The skew-field of fractions of $\mathcal{T}$ is denoted by $\mathcal{F}$. On the quantum torus $\mathcal{T}$, the $\mathbb{Z}$-linear bar-involution is defined by setting $$\overline{q^{\frac{r}{2}}X(\mathbf{a})}=q^{-\frac{r}{2}}X(\mathbf{a})$$ for any $r\in \ZZ$ and $\mathbf{a}\in\ZZ^{m}$.

Let  $e_k$ be the $k$-th standard unit vector in $\ZZ^{m}$ and set $X_k=X(e_k)$  for $1\leq k\leq m$. An easy computation shows  that
$$
X(\mathbf{a})=q^{\frac{1}{2}\sum\limits_{i< j}\lambda_{ji}a_ia_j}X_{1}^{a_1}X_{2}^{a_2}\ldots X_{m}^{a_m}
$$
for $\mathbf{a}=(a_1,a_2,\ldots,a_m)^{T}\in\ZZ^{m}$.

For any $1\leq i\leq n$, we say that $\widetilde{B}'=(b'_{kl})$ is obtained from
the matrix $\widetilde{B}=(b_{kl})$ by the matrix mutation in the direction $i$ if $\widetilde{B}':=\mu_i\big(\widetilde{B}\big)$ is given by
\begin{gather*}
b'_{kl}=
 \begin{cases}
 -b_{kl}, &\text{if}\quad k=i\quad \text{or}\quad l=i,
 \\
b_{kl}+\frac{|b_{ki}|b_{il}+b_{ki}|b_{il}|}{2}, &\text{otherwise}.
 \end{cases}
\end{gather*}

Denote the function
\begin{gather*}
[x]_+=
 \begin{cases}
 x, &\text{if}\quad x\geq 0;
 \\
0, & \text{if}\quad x\leq 0.
 \end{cases}
\end{gather*}

For any $1\leq i\leq n$ and a sign $\varepsilon\in\{\pm1\}$,  denote by $E_{\varepsilon}$  the $m\times m$ matrix associated to the matrix $\widetilde{B}=(b_{ij})$ with entries as follows
\begin{align*}
(E_{\varepsilon})_{kl}=\left\{
  \begin{aligned}
  \delta_{kl},{\hskip 0.7cm}&~\text{if}~l\neq i;\\
  -1,{\hskip 0.7cm}&~\text{if}~k=l=i;\\
  [-\varepsilon b_{ki}]_+,&~\text{if}~k\neq l=i.
  \end{aligned}
\right.
\end{align*}

\begin{proposition}\cite[Proposition 3.4]{BZ2005}\label{com}
Let $(\Lambda,\widetilde{B})$ be a compatible pair, then the pair $(\Lambda^{\prime},{\widetilde{B}}^{\prime})$ is also compatible and independent of the choice of
$\varepsilon$, where $\Lambda^{\prime}=E_{\varepsilon}^T\Lambda E_{\varepsilon}$ and $\widetilde{B}^\prime=\mu_i(\widetilde{B})$.
\end{proposition}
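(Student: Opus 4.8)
The plan, following Berenstein--Zelevinsky, is to recast everything as matrix identities. First I would rewrite the compatibility condition \eqref{equ1} as the single equation $\Lambda^{T}\widetilde{B}=\binom{D}{0}$, where $D$ is the $n\times n$ diagonal matrix with entries $d_{1},\dots,d_{n}$ and the right-hand side is the $m\times n$ matrix with top block $D$ and zero bottom block. Since $\Lambda$ is skew-symmetric, $\Lambda^{T}=-\Lambda$, so this is equivalent to $\Lambda\widetilde{B}=\binom{-D}{0}$ and, after transposing, to $\widetilde{B}^{T}\Lambda=(D\mid 0)$. From the last form one extracts $\widetilde{B}^{T}\Lambda\widetilde{B}=(D\mid 0)\widetilde{B}=DB$ on one side and $\widetilde{B}^{T}\binom{-D}{0}=-B^{T}D$ on the other, where $B$ is the top $n\times n$ block of $\widetilde{B}$; hence $DB=-B^{T}D$, i.e. $d_{i}b_{ij}=-d_{j}b_{ji}$ for $1\le i,j\le n$. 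This skew-symmetrizability identity is the one arithmetic input I will need later.

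Two structural facts about $E_{\varepsilon}$ enter. The first is that $E_{\varepsilon}$ is an involution, $E_{\varepsilon}^{2}=\mathrm{Id}$: since $E_{\varepsilon}$ differs from the identity only in column $i$, a short computation gives $(E_{\varepsilon}^{2})_{ki}=[-\varepsilon b_{ki}]_{+}-[-\varepsilon b_{ki}]_{+}=0$ for $k\neq i$, the remaining entries being immediate. The second is the factorization of the matrix mutation, $\mu_{i}(\widetilde{B})=E_{\varepsilon}\widetilde{B}F_{\varepsilon}$, where $F_{\varepsilon}$ is the $n\times n$ matrix equal to the identity outside row $i$ and with $(F_{\varepsilon})_{ii}=-1$, $(F_{\varepsilon})_{il}=[\varepsilon b_{il}]_{+}$ for $l\neq i$; like $E_{\varepsilon}$, it satisfies $F_{\varepsilon}^{2}=\mathrm{Id}$. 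One checks the factorization entrywise; for $k,l\neq i$ it reduces to the identity $[-\varepsilon b_{ki}]_{+}b_{il}+b_{ki}[\varepsilon b_{il}]_{+}=\tfrac{1}{2}\bigl(|b_{ki}|b_{il}+b_{ki}|b_{il}|\bigr)$, which follows from $[\pm x]_{+}=\tfrac{1}{2}(|x|\pm x)$. Note that $\widetilde{B}'=\mu_{i}(\widetilde{B})$ is manifestly independent of $\varepsilon$.

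For the independence of $\Lambda'=E_{\varepsilon}^{T}\Lambda E_{\varepsilon}$ from $\varepsilon$, I would compare $E_{+}$ and $E_{-}$. Writing $b_{i}$ for the $i$-th column of $\widetilde{B}$, one has $E_{+}-E_{-}=-b_{i}e_{i}^{T}$, because $[-b_{ki}]_{+}-[b_{ki}]_{+}=-b_{ki}$. Expanding $E_{+}^{T}\Lambda E_{+}-E_{-}^{T}\Lambda E_{-}$, the quadratic term is $(E_{+}-E_{-})^{T}\Lambda(E_{+}-E_{-})=e_{i}(b_{i}^{T}\Lambda b_{i})e_{i}^{T}=0$ by skew-symmetry, so only the two cross terms survive. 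These I evaluate with the compatibility relation in the form $\Lambda b_{i}=-d_{i}e_{i}$ (the $i$-th column of $\Lambda\widetilde{B}$) together with the fact that the $i$-th row of every $E_{\varepsilon}$ is $-e_{i}^{T}$; the cross terms then become $\pm d_{i}e_{i}e_{i}^{T}$ and cancel. Hence $\Lambda'$ is well defined, and it is clearly a skew-symmetric integer matrix, since $(\Lambda')^{T}=E_{\varepsilon}^{T}\Lambda^{T}E_{\varepsilon}=-\Lambda'$.

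Finally, for compatibility of the mutated pair I would combine the two facts above. Using $E_{\varepsilon}^{2}=\mathrm{Id}$ (whence $E_{\varepsilon}^{T}E_{\varepsilon}^{T}=(E_{\varepsilon}E_{\varepsilon})^{T}=\mathrm{Id}$) and the factorization,
\[
\widetilde{B}'^{\,T}\Lambda'=(E_{\varepsilon}\widetilde{B}F_{\varepsilon})^{T}E_{\varepsilon}^{T}\Lambda E_{\varepsilon}=F_{\varepsilon}^{T}\widetilde{B}^{T}E_{\varepsilon}^{T}E_{\varepsilon}^{T}\Lambda E_{\varepsilon}=F_{\varepsilon}^{T}\bigl(\widetilde{B}^{T}\Lambda\bigr)E_{\varepsilon}=F_{\varepsilon}^{T}(D\mid 0)E_{\varepsilon}.
\]
It then remains to verify $F_{\varepsilon}^{T}(D\mid 0)E_{\varepsilon}=(D\mid 0)$. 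This is an entrywise computation in which every entry outside row/column $i$ comes out correctly at once; the only genuine point is the off-diagonal column-$i$ entries, whose required vanishing is precisely $d_{j}[-\varepsilon b_{ji}]_{+}=d_{i}[\varepsilon b_{ij}]_{+}$, and this is where the skew-symmetrizability $d_{i}b_{ij}=-d_{j}b_{ji}$ from the first paragraph is used, by pulling the positive factor $d_{i}/d_{j}$ through $[\,\cdot\,]_{+}$. This shows $(\Lambda',\widetilde{B}')$ is compatible with the same diagonal matrix $D$, so in particular the $d_{j}$ remain positive. I expect the main obstacle to be bookkeeping rather than conceptual: keeping the conventions for $E_{\varepsilon}$ and $F_{\varepsilon}$ mutually consistent so that $\mu_{i}(\widetilde{B})=E_{\varepsilon}\widetilde{B}F_{\varepsilon}$ holds exactly, and then executing the block identity $F_{\varepsilon}^{T}(D\mid 0)E_{\varepsilon}=(D\mid 0)$ cleanly.
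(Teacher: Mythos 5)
The paper gives no proof of this proposition --- it is quoted verbatim from \cite[Proposition 3.4]{BZ2005} --- so the only meaningful comparison is with the original Berenstein--Zelevinsky argument, which your proof reconstructs faithfully: the matrix form $\widetilde{B}^{T}\Lambda=(D\mid 0)$ of compatibility, the involutivity $E_{\varepsilon}^{2}=\mathrm{Id}$, the factorization $\mu_{i}(\widetilde{B})=E_{\varepsilon}\widetilde{B}F_{\varepsilon}$, and the skew-symmetrizability $DB=-B^{T}D$ are exactly the ingredients used there, in the same roles. I checked the individual computations (the cancellation of the cross terms via $\Lambda b_{i}=-d_{i}e_{i}$ and the identity $d_{j}[-\varepsilon b_{ji}]_{+}=d_{i}[\varepsilon b_{ij}]_{+}$) and they are correct, so your argument is a complete and accurate proof along the standard lines.
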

We say that the compatible pair $(\Lambda^{\prime},{\widetilde{B}}^{\prime})$ is obtained from the compatible pair $(\Lambda, {\widetilde{B}})$ by mutation in the direction $i$ and denoted by $\mu_i(\Lambda, {\widetilde{B}})$.  It is  known that $\mu_i$ is an involution \cite[Proposition 3.6]{BZ2005}.

For each $1\leq i\leq n$, let $d_i$ be a positive integer such that $\frac{b_{li}}{d_i}$ are integers for all $1\leq l\leq m$ and denote by $\beta^{i}=\frac{1}{d_i}\mathbf{b}^{i}$, where $\mathbf{b}^{i}$ is the $i$-th column of $\widetilde{B}$. Denote by
$$
\mathbf{h}_i=\{h_{i,0}(q^{\frac{1}{2}}),h_{i,1}(q^{\frac{1}{2}}),\ldots,h_{i,d_i}(q^{\frac{1}{2}})\},  \,\,  1\leq i\leq n,$$
where $h_{k,l}(q^{\frac{1}{2}})\in\ZZ[q^{\pm\frac{1}{2}}]$ satisfying that $h_{k,l}(q^{\frac{1}{2}})= h_{k,d_k-l}(q^{\frac{1}{2}})$ and $h_{k,0}(q^{\frac{1}{2}})= h_{k,d_k}(q^{\frac{1}{2}})=1$. We set $\mathbf{h}:=(\mathbf{h}_1,\mathbf{h}_2,\ldots,\mathbf{h}_n)$.

\begin{definition}
With the above notations,  the quadruple $(X,\mathbf{h},\Lambda,\widetilde{B})$ is called a quantum seed if the pair $(\Lambda,\widetilde{B})$ be compatible. For a given quantum seed $(X,\mathbf{h},\Lambda,\widetilde{B})$ and each $1\leq i \leq n$, the new quadruple $$(X^\prime,\mathbf{h}^\prime,\Lambda^\prime,\widetilde{B}^\prime):=\mu_i(X,\mathbf{h},\Lambda,\widetilde{B})$$ is defined by
\begin{equation}\label{clustervarialbemutaion}
X^\prime(e_k)=\mu_i(X(e_k))=\left\{
  \begin{aligned}
    X(e_k), {\hskip 6.5cm}&~\text{if}~k\neq i;  \\
\sum\limits_{r=0}^{d_i}h_{i,r}(q^{\frac{1}{2}})X(r[\beta^i]_+ +(d_i-r)[-\beta^{i}]_+ -e_i), &~\text{if}~k=i,
  \end{aligned}
  \right.
\end{equation}
and
$$
\mathbf{h}^\prime=\mu_i(\mathbf{h})=\mathbf{h}~ \text{ and }~(\Lambda^\prime,\widetilde{B}^\prime)=\mu_i(\Lambda,\widetilde{B}).
$$
We say that the quadruple $\mu_i(X,\mathbf{h},\Lambda,\widetilde{B})$ is obtained from $(X,\mathbf{h},\Lambda,\widetilde{B})$ by  mutation in the direction $i$.
\end{definition}

\begin{proposition}\cite[Proposition 3.6]{BCDX}\label{seed}
Let the quadruple $(X,\mathbf{h},\Lambda,\widetilde{B})$ be a quantum seed, then the quadruple $\mu_i(X,\mathbf{h},\Lambda,\widetilde{B})$ is also a quantum seed.
\end{proposition}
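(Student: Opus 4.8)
The plan is to unwind the definition of ``quantum seed,'' which asks for nothing more than compatibility of the pair $(\Lambda,\widetilde{B})$ together with the tuple $\mathbf{h}$ being admissible, i.e.\ satisfying $h_{k,l}=h_{k,d_k-l}$ and $h_{k,0}=h_{k,d_k}=1$ relative to the mutation degrees $d_k$ read off from $\widetilde{B}$. Writing $\mu_i(X,\mathbf{h},\Lambda,\widetilde{B})=(X',\mathbf{h}',\Lambda',\widetilde{B}')$, the verification therefore splits into two independent tasks: first, that $(\Lambda',\widetilde{B}')$ is again compatible; and second, that $\mathbf{h}'$ is an admissible tuple for the new matrix $\widetilde{B}'$. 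The first task is handed to us directly: since $(\Lambda',\widetilde{B}')=\mu_i(\Lambda,\widetilde{B})$ with $\Lambda'=E_\varepsilon^T\Lambda E_\varepsilon$ and $\widetilde{B}'=\mu_i(\widetilde{B})$, Proposition~\ref{com} guarantees that $(\Lambda',\widetilde{B}')$ is compatible (and independent of the sign $\varepsilon$). So no new work on the compatibility side is needed.

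The substance of the argument lies in the second task. Because $\mathbf{h}'=\mathbf{h}$ by the definition of seed mutation, the symmetry and boundary conditions on each $\mathbf{h}_k$ are automatically inherited, provided the associated mutation degrees $d_k$ remain valid common divisors of the columns of $\widetilde{B}'$. I would check this column by column. For the mutated column $k=i$, the mutation rule gives $b'_{li}=-b_{li}$ for all $l$, so $d_i$ still divides every entry of the $i$-th column of $\widetilde{B}'$, and the number of elements of $\mathbf{h}_i$ is unchanged. For a column $j\neq i$, the rule gives $b'_{kj}=b_{kj}+\tfrac{1}{2}\bigl(|b_{ki}|b_{ij}+b_{ki}|b_{ij}|\bigr)$; here $d_j$ divides $b_{kj}$ and $b_{ij}$ by admissibility of the original seed, and the correction term equals either $0$ or $\pm b_{ki}b_{ij}$ and is thus divisible by $b_{ij}$, hence by $d_j$. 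Consequently $d_j\mid b'_{kj}$ for all $k$, so $d_j$ persists as a mutation degree and $\mathbf{h}_j$ remains admissible for $\widetilde{B}'$. Together with the inherited symmetry relations, this shows $\mathbf{h}'$ is admissible, and combining with the compatibility from Proposition~\ref{com} yields that $(X',\mathbf{h}',\Lambda',\widetilde{B}')$ is a quantum seed.

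I expect the only genuinely non-automatic point to be this divisibility bookkeeping for the columns $j\neq i$, since everything about the bilinear datum $\Lambda$ is already absorbed into Proposition~\ref{com}. It is worth remarking, as a sanity check that the formula defining $X'(e_i)$ makes sense, that $\beta^i=\tfrac{1}{d_i}\mathbf{b}^i$ is an integer vector precisely because $d_i\mid b_{li}$ for all $l$, so each exponent $r[\beta^i]_+ +(d_i-r)[-\beta^i]_+-e_i$ lies in $\ZZ^m$ and the new generators $X'(e_k)$ are honest elements of $\Fcal$; their quasi-commutation is then governed by $\Lambda'$. None of this imposes conditions beyond compatibility, so the proof reduces cleanly to invoking Proposition~\ref{com} and the elementary divisibility argument above.
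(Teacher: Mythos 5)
The paper itself gives no proof of this proposition: it is quoted verbatim from \cite[Proposition 3.6]{BCDX}, so there is no in-paper argument to compare yours against. Judged on its own, your reduction is partly right: the compatibility of $(\Lambda',\widetilde{B}')$ is indeed exactly Proposition~\ref{com}, and your divisibility check is correct and necessary --- for $j\neq i$ the correction term $\tfrac{1}{2}\bigl(|b_{ki}|b_{ij}+b_{ki}|b_{ij}|\bigr)$ equals $b_{ki}[b_{ij}]_+$ or $-b_{ki}[-b_{ij}]_+$, hence is a multiple of $b_{ij}$ and so of $d_j$, while the $i$-th column merely changes sign; thus each $d_j$ remains a valid common divisor of the $j$-th column of $\widetilde{B}'$ and $\mathbf{h}'=\mathbf{h}$ stays admissible.

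The gap is in the sentence you dismiss as a sanity check. In \cite{BCDX} (as in Berenstein--Zelevinsky) the datum $X$ in a quantum seed is a toric frame, i.e.\ a map $\ZZ^m\to\Fcal$ satisfying $X(\mathbf{a})X(\mathbf{b})=q^{\frac{1}{2}\Lambda(\mathbf{a},\mathbf{b})}X(\mathbf{a}+\mathbf{b})$, and the substantive content of the proposition is that the elements $X'(e_k)$ defined by \eqref{clustervarialbemutaion} extend to a toric frame for the \emph{new} form $\Lambda'$. You assert that ``their quasi-commutation is then governed by $\Lambda'$'' but prove nothing; this is precisely where the compatibility condition \eqref{equ1} is used beyond Proposition~\ref{com}. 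Concretely, one must check that each of the $d_i+1$ monomials $X\bigl(r[\beta^i]_+ +(d_i-r)[-\beta^i]_+-e_i\bigr)$ has the \emph{same} $q$-commutation factor with $X(e_k)$ for every $k\neq i$ (so that the sum defining $X'(e_i)$ quasi-commutes with the unmutated variables at all), and that the resulting exponent matrix is $\Lambda'=E_\varepsilon^T\Lambda E_\varepsilon$; this follows from $\Lambda(\beta^i,e_k)$ computations that invoke $\sum_l\lambda_{lk}b_{li}=\delta_{ki}d_i$. The paper's stripped-down Definition of ``quantum seed'' hides this requirement, but without it the proposition is nearly vacuous, so a complete proof must supply this computation rather than absorb everything into Proposition~\ref{com}.
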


Note that $\mu_i$ is an involution by \cite[Proposition 3.7]{BCDX}. Two quantum seeds are said to be mutation-equivalent if they can be obtained from
each other by a sequence of seed mutations. Given the initial quantum seed  $(X, \textbf{h}, \Lambda, \widetilde{B})$, let $(X^{\prime}, \textbf{h}^{\prime}, \Lambda^{\prime}, \widetilde{B}^{\prime})$ is mutation-equivalent to $(X, \textbf{h}, \Lambda, \widetilde{B})$. Denote by $X^{\prime}=\{X^{\prime}_1,\ldots,X^{\prime}_m\}$ which is called the extended cluster and the set $\{X^{\prime}_1,\ldots,X^{\prime}_n\}$ is called the cluster. The element $X^{\prime}_i$ is called a cluster variable for any  $1\leq i\leq n$ and $X^{\prime}_k$ a frozen variable for any $n+1\leq k\leq m$. Note that $X^{\prime}_k=X_k\ (n+1\leq k\leq m)$. For convenience, let $\mathbb{P}$ denote the multiplicative group generated by $X_{n+1},\ldots,X_{m}$ and $q^{\frac{1}{2}}$, and $\mathbb{ZP}$  the ring of the Laurent polynomials in $X_{n+1},\ldots, X_m$ with coefficients in $\ZZ[q^{\pm\frac{1}{2}}]$.
\begin{definition}\label{def of qgca}
Given the initial quantum seed $(X, \textbf{h}, \Lambda, \widetilde{B})$, the associated generalized quantum cluster algebra $\mathcal{A}(X, \textbf{h}, \Lambda, \widetilde{B})$  is the $\mathbb{ZP}$-subalgebra of $\mathcal{F}$ generated by all cluster variables from the quantum seeds which are mutation-equivalent to $(X, \textbf{h}, \Lambda, \widetilde{B})$.
\end{definition}

The following  Laurent phenomenon
is one of the most important results on generalized quantum cluster algebras.
\begin{theorem}\cite[Theorem 3.1]{BCDX-2}\label{laurent}
The generalized quantum cluster algebra $\mathcal{A}(X, \textbf{h}, \Lambda, \widetilde{B})$ is a
subalgebra of the ring of  Laurent polynomials in the cluster variables in any cluster over $\ZZ\mathbb{P}$.
\end{theorem}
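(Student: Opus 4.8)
The plan is to fix the initial quantum seed $(X,\mathbf{h},\Lambda,\widetilde{B})$ and to prove that every cluster variable is a Laurent polynomial in the cluster $\{X_1,\ldots,X_n\}$ with coefficients in $\ZZ\mathbb{P}$; since every quantum seed is mutation-equivalent to the initial one and each $\mu_i$ is an involution, the statement for an arbitrary cluster then follows by relabelling the roles of the seeds. I would argue by induction on the length $\ell$ of a shortest mutation sequence $\mu_{i_\ell}\cdots\mu_{i_1}$ producing the given variable. The cases $\ell\le 1$ are immediate: for $\ell=0$ the variable is one of the $X_k$, while for $\ell=1$ the exchange relation \eqref{clustervarialbemutaion} expresses $X'(e_i)$ directly as a $\ZZ[q^{\pm\frac12}]$-linear combination of the monomials $X\big(r[\beta^i]_+ +(d_i-r)[-\beta^i]_+ -e_i\big)$, each of which already lies in the quantum torus $\mathcal{T}$.

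For the inductive step I would transport the Fomin--Zelevinsky caterpillar argument to the present polynomial, noncommutative setting, organising the induction along a caterpillar tree so that the hypotheses of the following local statement are always available: for three seeds arranged as $t'\xleftarrow{\,j\,}t_0\xrightarrow{\,i\,}t''$ with $i\ne j$, a variable that is Laurent in the clusters at both $t'$ and $t''$ is in fact Laurent in the cluster at $t_0$. Writing the direction-$i$ exchange as $X_i\,X_i'=P_i$ with $P_i=\sum_{r=0}^{d_i}h_{i,r}(q^{\frac12})X\big(r[\beta^i]_+ +(d_i-r)[-\beta^i]_+\big)$, one checks that $P_i$ is a polynomial in the remaining cluster variables, since the exponent vectors have vanishing $i$-th coordinate and hence do not involve $X_i$. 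Here Proposition \ref{com} is essential: it guarantees that the mutated pair $(\Lambda',\widetilde{B}')$ stays compatible, so every $q^{\frac12}$-power produced when reordering factors through the commutation rule \eqref{qcommutation} remains in $\ZZ[q^{\pm\frac12}]$, and the resulting expressions are bar-invariant.

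The main obstacle is the divisibility that underlies this local statement, namely clearing the denominator $X_i$ when one passes from an expression over the cluster at $t''$ to one over the cluster at $t_0$. In the ordinary binomial case $P_i$ has only two terms, whereas here it carries $d_i+1$ terms, which makes the cancellation more delicate. The decisive point is that the two extreme terms $r=0$ and $r=d_i$ are genuine monomials, because $h_{i,0}(q^{\frac12})=h_{i,d_i}(q^{\frac12})=1$, and they equal $X\big(d_i[-\beta^i]_+\big)$ and $X\big(d_i[\beta^i]_+\big)$; since $[\beta^i]_+$ and $[-\beta^i]_+$ have disjoint supports, these two monomials are coprime. A quantum analogue of the coprimality/greatest-common-divisor argument then forces the spurious power of $X_i$ to divide the numerator, and combining the two Laurent expressions valid at $t'$ and $t''$ yields one valid at $t_0$. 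Checking that the $q^{\frac12}$-prefactors match under the skew-symmetric form $\Lambda$ throughout this cancellation, so that the outcome is a single bar-invariant Laurent polynomial over $\ZZ\mathbb{P}$, is the technical heart of the argument; once it is secured the induction closes and Theorem \ref{laurent} follows.
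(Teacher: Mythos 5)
First, note that this theorem is not proved in the present paper at all: it is quoted, with citation, from \cite[Theorem 3.1]{BCDX-2} (``Generalized quantum cluster algebras: the Laurent phenomenon and upper bounds''), so there is no in-paper argument to compare yours with. The proof in that reference follows the Berenstein--Zelevinsky strategy for quantum cluster algebras: one introduces the upper bound of a seed (the intersection of the Laurent rings of that seed and its $n$ adjacent seeds), proves by explicit rank-two computations that the upper bound is invariant under mutation, and deduces that the whole algebra sits inside the Laurent ring of every seed. Your proposal instead transplants the Fomin--Zelevinsky caterpillar argument, which is a legitimately different route in principle, but as written it has real gaps.

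The first gap is structural. Your induction is ``on the length of a shortest mutation sequence producing the given variable,'' with a local lemma asserting that Laurentness at the two outer seeds of $t'\xleftarrow{\,j\,}t_0\xrightarrow{\,i\,}t''$ implies Laurentness at the middle seed $t_0$. You never explain how the inductive hypothesis supplies Laurentness of the target variable at \emph{both} $t'$ and $t''$: at least one of these seeds is farther from the variable's seed than $t_0$ is, so the induction does not close as stated. The Fomin--Zelevinsky argument avoids this by fixing the expansion seed at the head of a caterpillar and inducting along the spine, and its key lemma is a four-seed statement of the shape $\mathcal{L}(t_0)\cap\mathcal{L}(t_1)\cap\mathcal{L}(t_2)\subseteq\mathcal{L}(t_3)$ whose hypotheses include coprimality of the exchange polynomial $P_j$ with its \emph{mutated} version $P_j'$ along the intermediate edge --- a condition you never formulate, and which in the generalized setting depends on the coefficients $h_{i,r}$. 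The second gap is the one you yourself flag as ``the technical heart'': the divisibility of the numerator by the cleared power of $X_i$. The coprimality you offer (of the two extreme monomials $X(d_i[\beta^i]_+)$ and $X(d_i[-\beta^i]_+)$ of a single $P_i$) is not the coprimality that drives the cancellation, and in the noncommutative quantum torus there is no ambient UFD in which a ``quantum gcd argument'' can simply be invoked; making this precise (via a suitable Ore localization, a leading-term or specialization argument, or the upper-bound machinery of \cite{BZ2005} and \cite{BCDX-2}) is essentially the entire content of the theorem, and the proposal defers it. So the outline is plausible, but the proof is not there.
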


\section{The cluster multiplication formulas of $\A_{q}(2,2)$}

In the following, we will consider  the generalized quantum cluster algebra associated with the initial seed $(X,\mathbf{h},\Lambda,B)$, where $\mathbf{d}=(2,2)$, $\textbf{h}_1=\textbf{h}_2=(1,h,1)$ with
$h\in \mathbb{Z}[q^{\pm\frac{1}{2}}]$ and $\overline{h}=h$,
\begin{equation*}
\Lambda=\left(
  \begin{array}{cc}
    0 & 1 \\
    -1 & 0 \\
  \end{array}
\right)
\text{~and~}
B=\left(
  \begin{array}{cc}
    0 & 2 \\
    -2 & 0 \\
  \end{array}
\right).
\end{equation*}
Note that $\Lambda^{T}B=\left(
  \begin{array}{cc}
    2 & 0 \\
    0 & 2 \\
  \end{array}
\right),$ and the based quantum torus is
$$\mathcal{T}=\mathbb{Z}[q^{\pm\frac{1}{2}}][X^{\pm 1}_1,X^{\pm 1}_2|X_1X_2=qX_2X_1].$$

The quiver   associated to the matrix $B$ is the Kronecker quiver $Q$:
$$
\xymatrix {1\bullet  \ar @<2pt>[r] \ar @<-2pt>[r]& \bullet 2}
$$
We call this algebra a generalized quantum cluster algebra of Kronecker type, denoted by $\A_{q}(2,2)$.
By the definition and the Laurent phenomenon, $\A_{q}(2,2)$ is the $\mathbb{Z}[q^{\pm\frac{1}{2}}]$-subalgebra of $\mathcal{T}$ generated by the cluster variables $\{X_k~|~k\in\mathbb{Z}\}$ which are obtained from the following exchange relations:
\begin{equation*}
  X_{k-1}X_{k+1}=qX^{2}_{k}+q^{\frac{1}{2}}hX_k+1.
\end{equation*}

Recall that the $n$-th Chebyshev polynomials of the first kind $F_{n}(x)$ is defined by
$$
F_0(x)=1,F_1(x)=x, F_2(x)=x^2-2, F_{n+1}(x)=F_{n}(x)x-F_{n-1}(x)~\text{for}~n\geq2,
$$
and $F_n(x)=0$ for $n<0$.

Denote
$$X_\delta:=q^{\frac{1}{2}}X_{0}X_3-q^{\frac{1}{2}}(q^{\frac{1}{2}}X_1+h)(q^{\frac{1}{2}}X_{2}+h),$$
thus $X_\delta\in \A_{q}(2,2)$.
\begin{lemma}\label{bar}
For each $n\in \mathbb{Z}_{>0}$, $F_{n}(X_\de)$ is a bar-invariant element in $\A_{q}(2,2)$.
\end{lemma}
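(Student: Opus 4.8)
The plan is to reduce the statement to a single bar-invariance assertion about $X_\delta$ and then establish that by an explicit expansion in the standard monomial basis of $\mathcal{T}$. First I would record that the bar-involution is a $\ZZ$-algebra anti-automorphism of $\mathcal{T}$: applying the bar to \eqref{qcommutation} gives
\[
\overline{X(\mathbf{a})X(\mathbf{b})}=q^{-\frac{1}{2}\Lambda(\mathbf{a},\mathbf{b})}X(\mathbf{a}+\mathbf{b})=\overline{X(\mathbf{b})}\,\overline{X(\mathbf{a})},
\]
so $\overline{uv}=\overline{v}\,\overline{u}$ for all $u,v\in\mathcal{T}$. Since the Chebyshev polynomial $F_n$ has integer coefficients and $X_\delta\in\A_{q}(2,2)$ (already observed), the element $F_n(X_\delta)$ is a $\ZZ$-linear combination of powers $X_\delta^k$ and hence lies in the subalgebra $\A_{q}(2,2)$. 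Moreover $\overline{X_\delta^{\,k}}=(\overline{X_\delta})^{k}$, because reversing the order of $k$ equal factors changes nothing, and integers are bar-invariant; thus $\overline{F_n(X_\delta)}=F_n(\overline{X_\delta})$. Consequently the whole lemma follows once I show $\overline{X_\delta}=X_\delta$.

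To prove $X_\delta$ bar-invariant I would expand it in the $\ZZ[q^{\pm\frac12}]$-basis $\{X(\mathbf{a})\mid\mathbf{a}\in\ZZ^{2}\}$, where \eqref{qcommutation} reads $X(\mathbf{a})X(\mathbf{b})=q^{\frac12(a_1b_2-a_2b_1)}X(\mathbf{a}+\mathbf{b})$. Using the exchange relations $X_0X_2=qX_1^{2}+q^{\frac12}hX_1+1$ and $X_1X_3=qX_2^{2}+q^{\frac12}hX_2+1$, I first rewrite the two boundary variables as $X_0=X(2,-1)+hX(1,-1)+X(0,-1)$ and $X_3=X(-1,2)+hX(-1,1)+X(-1,0)$. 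I then multiply out $q^{\frac12}X_0X_3$ and $q^{\frac12}(q^{\frac12}X_1+h)(q^{\frac12}X_2+h)$ monomial by monomial, tracking the half-integer powers of $q$ produced by each commutation. The four monomials $X(1,1),X(1,0),X(0,1),X(0,0)$ that appear in both products cancel exactly, leaving
\[
X_\delta=X(1,-1)+X(-1,1)+X(-1,-1)+h\bigl(X(0,-1)+X(-1,0)\bigr).
\]

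In this expression every coefficient is either $1$ or $h$, both bar-invariant (recall $\overline{h}=h$), and each basis element $X(\mathbf{a})$ is itself bar-invariant by definition of the bar-involution; hence $\overline{X_\delta}=X_\delta$, which by the first paragraph completes the argument. The only delicate point is the bookkeeping in the second step: one must keep careful track of the $q^{\pm\frac12}$ factors coming from \eqref{qcommutation} across the nine products in $X_0X_3$, since it is precisely the exact cancellation of the $X(1,1),X(1,0),X(0,1),X(0,0)$ terms together with the survival of the remaining five terms with bar-invariant coefficients that forces $X_\delta$ to be bar-invariant. Beyond this routine computation no conceptual obstacle arises.
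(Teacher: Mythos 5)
Your proposal is correct and follows essentially the same route as the paper: the paper likewise reduces the lemma to the explicit expansion $X_\delta=X(-1,-1)+hX(-1,0)+hX(0,-1)+X(-1,1)+X(1,-1)$, whose coefficients $1$ and $h$ are bar-invariant, and then passes to $F_n(X_\delta)$ via the Chebyshev recursion. Your additional remarks (that the bar-involution is a $\ZZ$-algebra anti-automorphism and that $\overline{X_\delta^{\,k}}=(\overline{X_\delta})^{k}$) merely make explicit the inductive step the paper leaves implicit.
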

\begin{proof}
An direct computation shows that
 $$X_\delta=X(-1,-1)+hX(-1,0)+hX(0,-1)+X(-1,1)+X(1,-1),$$
 thus $X_\de$ is a bar-invariant element in $\A_{q}(2,2)$. Then the proof  follows from the definition of the $n$-th Chebyshev polynomials  $F_{n}(x)$.
\end{proof}
We define an automorphism denoted by $\sigma$ on the generalized quantum cluster algebra $\A_{q}(2,2)$ as follows
$$\sigma(X_{k})=X_{k+1}\ \text{~and~} \ \sigma(q^{\frac{k}{2}})=q^{\frac{k}{2}},$$
for any $k\in \mathbb{Z}$. Then we have the following result which will be useful for us to prove the cluster multiplication formulas.
\begin{lemma}\label{auto}
For each $n\in \mathbb{Z}_{>0}$, $\sigma(F_{n}(X_\de))=F_{n}(X_\de).$
\end{lemma}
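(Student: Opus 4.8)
The plan is to show that $\sigma$ fixes $X_\delta$ itself, and then deduce the statement for each Chebyshev polynomial $F_n(X_\delta)$ by functoriality. Since $\sigma$ is an algebra automorphism fixing $q^{\frac{1}{2}}$, it commutes with polynomial expressions in any single element: once we know $\sigma(X_\delta)=X_\delta$, we immediately get $\sigma(F_n(X_\delta))=F_n(\sigma(X_\delta))=F_n(X_\delta)$, because $F_n$ has integer coefficients and $\sigma$ fixes $\ZZ[q^{\pm\frac{1}{2}}]$. So the entire content of the lemma reduces to the single identity $\sigma(X_\delta)=X_\delta$.

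To establish $\sigma(X_\delta)=X_\delta$, I would work from the definition $X_\delta=q^{\frac{1}{2}}X_0X_3-q^{\frac{1}{2}}(q^{\frac{1}{2}}X_1+h)(q^{\frac{1}{2}}X_2+h)$ and apply $\sigma$ termwise, using $\sigma(X_k)=X_{k+1}$. This gives
\begin{equation*}
\sigma(X_\delta)=q^{\frac{1}{2}}X_1X_4-q^{\frac{1}{2}}(q^{\frac{1}{2}}X_2+h)(q^{\frac{1}{2}}X_3+h).
\end{equation*}
The task is then to verify that this shifted expression equals the original $X_\delta$. The natural tool is the exchange relation $X_{k-1}X_{k+1}=qX_k^2+q^{\frac{1}{2}}hX_k+1$, which lets me rewrite products of non-adjacent cluster variables. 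Concretely, I would use the monomial form computed in Lemma~\ref{bar}, namely $X_\delta=X(-1,-1)+hX(-1,0)+hX(0,-1)+X(-1,1)+X(1,-1)$, as the target, and expand $\sigma(X_\delta)$ into the same $X(\mathbf{a})$ basis of the quantum torus $\mathcal{T}$.

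The cleanest route is to expand both $X_\delta$ and $\sigma(X_\delta)$ in terms of the standard quantum-torus monomials and check they coincide coefficientwise. For $X_\delta$ this is already done in Lemma~\ref{bar}; for $\sigma(X_\delta)$ I would compute $X_1X_4$ and the product $(q^{\frac{1}{2}}X_2+h)(q^{\frac{1}{2}}X_3+h)$ by repeatedly invoking the exchange relation together with the commutation rule \eqref{qcommutation}, expressing everything in the variables $X_1,X_2$ (equivalently in the $X(\mathbf{a})$ notation). Because $X_\delta$ is, up to the known scalar normalizations, the affine imaginary-root element invariant under the shift, the two expansions should match term by term. The main obstacle is purely computational bookkeeping: keeping track of the half-integer powers of $q$ arising from \eqref{qcommutation} when reordering $X_i$'s, and ensuring the $h$-linear and $h$-quadratic contributions cancel or combine correctly. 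There is no conceptual difficulty—the only risk is a sign or $q$-power slip in the noncommutative expansion, so I would organize the computation by grouping terms according to their total $(a_1,a_2)$-degree and verifying each degree class separately.
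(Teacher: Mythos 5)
Your proposal is correct and follows essentially the same route as the paper: reduce the lemma to the single identity $\sigma(X_\delta)=X_\delta$, verify that identity by expanding $q^{\frac{1}{2}}X_1X_4-q^{\frac{1}{2}}(q^{\frac{1}{2}}X_2+h)(q^{\frac{1}{2}}X_3+h)$ in the quantum-torus monomials $X(\mathbf{a})$ and matching it against the expansion of $X_\delta$, then conclude for all $F_n(X_\delta)$ (the paper phrases this last step as induction via the Chebyshev recursion, which is equivalent to your functoriality argument). The only difference is that the paper actually carries out the monomial expansion of $X_3$, $X_4$, and the two products, which is the computational core you defer.
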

\begin{proof}
Note that
$$
\sigma(X_\de)=q^{\frac{1}{2}}X_1X_4-q^{\frac{1}{2}}(q^{\frac{1}{2}}X_2+h)(q^{\frac{1}{2}}X_3+h),
$$
$$
X_3=X(-1,2)+hX(-1,1)+X(-1,0)
$$
and
\begin{align*}
X_4=&X(-2,3)+(q^{-\frac{1}{2}}+q^{\frac{1}{2}})hX(-2,2)+(q^{-1}+h^2+q)X(-2,1) +(q^{-\frac{1}{2}} +q^{\frac{1}{2}})hX(-2,0) \\
&+X(-2,-1)+hX(-1,1)+h^2X(-1,0)+hX(-1,-1)+X(0,-1).
\end{align*}
Thus
\begin{align*}
q^{\frac{1}{2}}X_1X_4=&q^2X(-1,3)+(q+q^2)hX(-1,2)+(1+qh^2+q^2)X(-1,1)+(1+q)hX(-1,0) \\
&+X(-1,-1) +qhX(0,1)+q^{\frac{1}{2}}h^2+hX(0,-1)+X(1,-1)
\end{align*}
and
\begin{align*}
q^{\frac{1}{2}}(q^{\frac{1}{2}}X_2+h)(q^{\frac{1}{2}}X_3+h)=&q^2X(-1,3)+(q+q^2)hX(-1,2) +qhX(0,1)\\
&+(qh^2+q^2)X(-1,1)+qhX(-1,0)+q^{\frac{1}{2}}h^2.
\end{align*}
We obtain that
$$\sigma(X_\de)=X(-1,1)+hX(-1,0)+X(-1,-1)+hX(0,-1)+X(1,-1)=X_\de.$$
Then the proof  follows from the induction on $n$ and  the definition of the $n$-th Chebyshev polynomials  $F_{n}(x)$.
\end{proof}

For a real number $x$, denote the floor function  by  $\lfloor x\rfloor$ and the ceiling function by $\lceil x\rceil$.
The following Theorem \ref{multi-1} and Remark \ref{multi-2} give the explicit cluster multiplication formulas for  $\A_{q}(2,2)$.
\begin{theorem}\label{multi-1}
Let $m$ and $n$ be integers.
\begin{itemize}
\item[(1)] For any  $m> n\geq 1$, we have
\begin{align}\label{equation1}
F_{m}(X_\delta)F_{n}(X_\delta)=F_{m+n}(X_\delta)+F_{m-n}(X_\delta),\ F_{n}(X_\delta)F_{n}(X_\delta)=F_{2n}(X_\delta)+2.
\end{align}

\item[(2)] For any  $n\geq 1$, we have
\begin{align}\label{equation1}
X_mF_{n}(X_\delta)=q^{-\frac{n}{2}}X_{m-n}+q^{\frac{n}{2}}X_{m+n} +\sum\limits_{k=1}^{n}(\sum\limits_{l=1}^{k}q^{-\frac{k+1}{2}+l})hF_{n-k}(X_\delta).
\end{align}

\item[(3)] For any $n\geq 2$, we have
\begin{align}\label{equation2}
X_mX_{m+n}=&q^{\lfloor\frac{n}{2}\rfloor}X_{\lfloor m+\frac{n}{2}\rfloor}X_{\lceil m+\frac{n}{2}\rceil} +\sum\limits_{k=1}^{n-1}(\sum\limits_{l=1}^{\text{min}(k,n-k)}q^{-\frac{1}{2}+l})hX_{m+n-k}\nonumber \\ &+\sum\limits_{l=1}^{n-1}q^{-\frac{n-1-l}{2}}c_lF_{n-1-l}(X_\delta),
\end{align}
where $c_1=1$, $c_2=h^2$ and for $k\geq2$,
$$
c_{2k}=[\sum\limits_{i=1}^{k-1}a_i(q^{-(k-i)}+q^{k-i})+a_k]h^2
$$
and
\begin{align*}
c_{2k-1}=2[\sum\limits_{i=1}^{k-1}b_i(q^{-(k-i)}+q^{k-i})+b_k]h^2+\left\{
  \begin{aligned}
    \sum\limits_{i=1}^{\frac{k}{2}}(q^{-(k+1-2i)}+q^{k+1-2i}),{\hskip 0.5cm}~& ~\text{ if }~k~\text{is even};  \\
    \sum\limits_{i=1}^{\frac{k-1}{2}}(q^{-(k+1-2i)}+q^{k+1-2i})+1,& ~\text{ if }~k~\text{is odd},
  \end{aligned}
  \right.
\end{align*}
with $a_{i}=\frac{i(i+1)}{2}$ and
$$
b_i=\left\{
\begin{aligned}
\frac{i^2-1}{4},~ & ~\text{ if }~i~\text{is odd}; \\
\frac{i^2}{4},{\hskip 0.4cm}~ & ~\text{ if }~i~\text{is even}.
\end{aligned}
\right.
$$
\end{itemize}
\end{theorem}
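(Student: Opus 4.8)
The plan is to prove the three multiplication formulas in increasing order of difficulty, leveraging the Chebyshev recursion, the shift automorphism $\sigma$ of Lemma \ref{auto}, and the fundamental exchange relation $X_{k-1}X_{k+1}=qX_k^2+q^{\frac{1}{2}}hX_k+1$. Part (1) is essentially formal: since $F_n(X_\delta)$ are all polynomials in the single bar-invariant element $X_\delta$ (Lemma \ref{bar}), they pairwise commute, and the product formula $F_m(x)F_n(x)=F_{m+n}(x)+F_{m-n}(x)$ is the classical Chebyshev identity (valid because $F_n$ here are normalized Chebyshev polynomials of the first kind, satisfying $F_n(2\cos\theta)=2\cos(n\theta)$). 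So I would first record the commutativity of the $F_n(X_\delta)$, then cite or verify the scalar Chebyshev identity by induction on $n$ using $F_{n+1}=F_n x-F_{n-1}$, specializing the boundary case $m=n$ to get the extra constant $2=F_0+F_0$ from $F_{m-n}=F_0=1$ doubled.

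For part (2), the strategy is induction on $n$ with $X_\delta$ playing the role of the variable. I would start from the definition $X_\delta=q^{\frac{1}{2}}X_0X_3-q^{\frac{1}{2}}(q^{\frac{1}{2}}X_1+h)(q^{\frac{1}{2}}X_2+h)$ and its $\sigma$-shifts to obtain a relation expressing $X_mX_\delta$ (or $X_m F_1(X_\delta)=X_m X_\delta$) as a combination of $X_{m-1}$, $X_{m+1}$, and an $h$-correction term. Concretely, I would establish the base case $n=1$ directly from the $X_\delta$ expansion, then apply $F_{n+1}(X_\delta)=X_\delta F_n(X_\delta)-F_{n-1}(X_\delta)$ and multiply on the left by $X_m$, pushing $X_\delta$ through using the computed $X_m X_\delta$ relation together with the quantum commutation $X(\mathbf a)X(\mathbf b)=q^{\frac12\Lambda(\mathbf a,\mathbf b)}X(\mathbf a+\mathbf b)$. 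The telescoping of the two leading terms $q^{-\frac{n}{2}}X_{m-n}+q^{\frac{n}{2}}X_{m+n}$ follows from tracking the powers of $q^{\frac12}$ generated at each step, while the $h$-terms accumulate into the double sum $\sum_{k=1}^n\big(\sum_{l=1}^k q^{-\frac{k+1}{2}+l}\big)hF_{n-k}(X_\delta)$; verifying that the bookkeeping of these coefficients matches the claimed closed form is the bulk of the work here.

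Part (3) is the main obstacle and is where I expect the genuine difficulty to lie, since the coefficients $c_l$ have an intricate parity-dependent closed form. The approach is to reduce $X_mX_{m+n}$ to the quantities already controlled in parts (1) and (2). The natural route is to use part (2) to rewrite products $X_aF_b(X_\delta)$ and to set up a recursion in $n$ for $X_mX_{m+n}$ by expressing $X_{m+n}$ through the exchange relations or through $X_\delta$-multiplication, thereby relating $X_mX_{m+n}$ to $X_mX_{m+n-1}$, $X_mX_{m+n-2}$, and lower $F$-terms. The leading term $q^{\lfloor n/2\rfloor}X_{\lfloor m+n/2\rfloor}X_{\lceil m+n/2\rceil}$ should emerge from iterating the relation until the two indices are as close as possible, and the middle $h$-sum $\sum_{k=1}^{n-1}\big(\sum_{l=1}^{\min(k,n-k)}q^{-\frac12+l}\big)hX_{m+n-k}$ should come from the accumulated $h$-corrections. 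The hard part will be identifying the constant-coefficient contributions $\sum_{l=1}^{n-1}q^{-\frac{n-1-l}{2}}c_lF_{n-1-l}(X_\delta)$: I would set up the recursion these $c_l$ must satisfy, solve it separately for even and odd indices, and then verify by a careful induction that the proposed closed forms (with $a_i=\frac{i(i+1)}{2}$ and the parity-split $b_i$) satisfy that recursion together with the base cases $c_1=1$, $c_2=h^2$. This final matching of an explicitly guessed formula against a two-step recursion, keeping track of the symmetric $q$-powers $q^{-(k-i)}+q^{k-i}$, is the delicate computational core of the theorem.
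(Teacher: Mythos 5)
Your proposal is correct and follows essentially the same route as the paper: part (1) from the classical Chebyshev product identity, part (2) by induction on $n$ via the recursion $F_{n}=F_{n-1}X_\delta-F_{n-2}$ starting from the directly computed relation $X_mX_\delta=q^{-\frac{1}{2}}X_{m-1}+q^{\frac{1}{2}}X_{m+1}+h$, and part (3) by the induced two-step recursion $X_1X_{n+1}=q^{-\frac{1}{2}}X_1X_nX_\delta-q^{-1}X_1X_{n-1}-q^{-\frac{1}{2}}hX_1$ with a parity-split verification that the closed forms for $c_l$ satisfy $c_{n-1}-q^{-1}c_{n-3}=\cdots$. The paper likewise uses the shift automorphism $\sigma$ to reduce to $m=1$, so the only work left in your plan is the coefficient bookkeeping you already flag as the computational core.
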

\begin{proof}
(1)  The proof is immediately from the definition of the $n$-th Chebyshev polynomials  $F_{n}(x)$.

(2) By using the automorphism $\sigma$ repeatedly, it suffices to prove the following equation
$$
X_1F_n(X_\delta)=q^{-\frac{n}{2}}X_{1-n}+q^{\frac{n}{2}}X_{1+n} +\sum\limits_{k=1}^{n}(\sum\limits_{l=1}^{k}q^{-\frac{k+1}{2}+l})hF_{n-k}(X_\delta),
$$
for $n\geq0$.

When $n=1$,
\begin{align*}
X_1X_\delta=&X(1,0)(X(-1,-1)+hX(-1,0)+hX(0,-1)+X(-1,1)+X(1,-1))\\
=&q^{-\frac{1}{2}}X(0,-1)+h+q^{-\frac{1}{2}}hX(1,-1)+q^{\frac{1}{2}}X(0,1)+q^{-\frac{1}{2}}X(2,-1).
\end{align*}
Note that $X_0=X(2,-1)+hX(1,-1)+X(0,-1)$. Thus $X_1X_\delta=q^{-\frac{1}{2}}X_{0}+q^{\frac{1}{2}}X_2+h$. It follows that $$X_mX_{\delta}=q^{-\frac{1}{2}}X_{m-1}+q^{\frac{1}{2}}X_{m+1}+h$$ for all $m\in\ZZ$.

When $n=2$,
\begin{align*}
X_1F_2(X_\delta)=&X_1(X_{\delta}^{2}-2)=q^{-\frac{1}{2}}X_0X_\delta+ q^{\frac{1}{2}}X_2X_\delta+hX_\delta-2X_1 \\
=&q^{-1}X_{-1}+qX_3+(q^{-\frac{1}{2}}+q^{\frac{1}{2}})h+hX_\de.
\end{align*}

When $n\geq3$, assume that $X_1F_t(X_\de)=q^{-\frac{t}{2}}X_{1-t}+ q^{\frac{t}{2}}X_{1+t}+\sum\limits_{k=1}^{t}(\sum\limits_{l=1}^{k} q^{-\frac{k+1}{2}+l})hF_{n-k}(X_\de)$ for $t\leq n-1$.

If $t=n$, then
$$
X_1F_{n}(X_\de)=X_1(F_{n-1}(X_\de)X_\de-F_{n-2}(X_\de))=X_1F_{n-1}(X_\de)X_\de-X_1F_{n-2}(X_\de),
$$
\begin{align*}
&X_1F_{n-1}(X_\de)X_\de \\
=&q^{-\frac{n-1}{2}}X_{2-n}X_\de+q^{\frac{n-1}{2}}X_nX_\de+ \sum\limits_{k=1}^{n-1}(\sum\limits_{l=1}^{k} q^{-\frac{k+1}{2}+l})hF_{n-1-k}(X_\de)X_\de \\
=&q^{-\frac{n}{2}}X_{1-n}+q^{1-\frac{n}{2}}X_{3-n}+q^{\frac{n}{2}-1}X_{n-1}+q^{\frac{n}{2}}X_{n+1}+ (q^{-\frac{n-1}{2}}+q^{\frac{n-1}{2}})h\\
&+ \sum\limits_{k=1}^{n-1}(\sum\limits_{l=1}^{k} q^{-\frac{k+1}{2}+l})hF_{n-1-k}(X_\de)X_\de,
\end{align*}
and $X_1F_{n-2}(X_\de)=q^{1-\frac{n}{2}}X_{3-n}+q^{\frac{n}{2}-1}X_{n-1}+ \sum\limits_{k=1}^{n-2}(\sum\limits_{l=1}^{k} q^{-\frac{k+1}{2}+l})hF_{n-2-k}(X_\de)$. Note that
\begin{align*}
&\sum\limits_{k=1}^{n-1}(\sum\limits_{l=1}^{k} q^{-\frac{k+1}{2}+l})hF_{n-1-k}(X_\de)X_\de -\sum\limits_{k=1}^{n-2}(\sum\limits_{l=1}^{k} q^{-\frac{k+1}{2}+l})hF_{n-2-k}(X_\de) \\
=&\sum\limits_{k=1}^{n-3}(\sum\limits_{l=1}^{k} q^{-\frac{k+1}{2}+l})h(F_{n-1-k}(X_\de)X_\de-F_{n-2-k}(X_\de)) +(\sum\limits_{l=1}^{n-2} q^{-\frac{n-1}{2}+l})h(X_{\de}^{2}-2)\\
&+(\sum\limits_{l=1}^{n-2} q^{-\frac{n-1}{2}+l})h  +(\sum\limits_{l=1}^{n-1} q^{-\frac{n}{2}+l})hX_\de\\
=&\sum\limits_{k=1}^{n-1}(\sum\limits_{l=1}^{k} q^{-\frac{k+1}{2}+l})hF_{n-k}(X_\de)+ (\sum\limits_{l=1}^{n-2} q^{-\frac{n-1}{2}+l})h
\end{align*}
and $\sum\limits_{l=1}^{n-2}q^{-\frac{n-1}{2}+l}h+(q^{-\frac{n-1}{2}}+q^{\frac{n-1}{2}})h= \sum\limits_{l=1}^{n}q^{-\frac{n+1}{2}+l}h$.

It follows that $X_1F_{n}(X_\de)=q^{-\frac{n}{2}}X_{1-n}+q^{\frac{n}{2}}X_{n+1} +\sum\limits_{k=1}^{n}(\sum\limits_{l=1}^{k}q^{-\frac{k+1}{2}+l})hF_{n-k}(X_\de)$.

(3) In order to prove (\ref{equation2}), it suffices to show that
$$
X_1X_{1+n}=q^{\lfloor\frac{n}{2}\rfloor}X_{\lfloor 1+\frac{n}{2}\rfloor}X_{\lceil 1+\frac{n}{2}\rceil} +\sum\limits_{k=1}^{n-1}(\sum\limits_{l=1}^{\text{min}(k,n-k)}q^{-\frac{1}{2}+l})hX_{1+n-k} +\sum\limits_{l=1}^{n-1}q^{-\frac{n-1-l}{2}}c_lF_{n-1-l}(X_\delta)
$$
for $n\geq1$.

When $n=2$, it is the exchange relation. When $n=3$, by (\ref{equation1}), we have that
\begin{align*}
&X_1X_4\\
=&X_1(q^{-\frac{1}{2}}X_3X_\de-q^{-1}X_2-q^{-\frac{1}{2}}h)\\
=&q^{-\frac{1}{2}}(qX_{2}^{2}+q^{\frac{1}{2}}hX_2+1)X_\de -q^{-1}X_1X_2-q^{-\frac{1}{2}}hX_1\\
=&q^{\frac{1}{2}}X_2(q^{-\frac{1}{2}}X_1+q^{\frac{1}{2}}X_3+h)+h(q^{-\frac{1}{2}}X_1+q^{\frac{1}{2}}X_3+h) +q^{-\frac{1}{2}}X_\de -q^{-1}X_1X_2-q^{-\frac{1}{2}}hX_1\\
=&qX_2X_3+q^{\frac{1}{2}}hX_2+q^{\frac{1}{2}}hX_3+q^{-\frac{1}{2}}X_\de+h^2.
\end{align*}

Assume that
\begin{align*}
X_1X_{1+t}=q^{\lfloor \frac{t}{2}\rfloor}X_{\lfloor1+\frac{t}{2}\rfloor}X_{\lceil1+\frac{t}{2}\rceil}+ \sum\limits_{k=1}^{t-1}(\sum\limits_{l=1}^{\text{min}(k,t-k)}q^{-\frac{1}{2}+l})hX_{1+t-k} +\sum\limits_{l=1}^{t-1}q^{-\frac{t-1-l}{2}}c_lF_{t-1-l}(X_\de)
\end{align*}
for all $t\leq n-1$.

Note that $X_1X_{n+1}=q^{-\frac{1}{2}}X_1X_nX_\de-q^{-1}X_1X_{n-1}-q^{-\frac{1}{2}}hX_1$.

When $n$ is even and $n\geq 4$, then
$$
X_1X_n=q^{\frac{n}{2}-1}X_{\frac{n}{2}}X_{\frac{n}{2}+1}+\sum\limits_{k=1}^{n-2}(\sum\limits_{l=1}^{\text{min}(k,n-1-k)} q^{-\frac{1}{2}+l})hX_{n-k} +\sum\limits_{l=1}^{n-2}q^{-\frac{n-2-l}{2}}c_lF_{n-2-l}(X_\de),
$$
$$
q^{-1}X_1X_{n-1}=q^{\frac{n}{2}-2}X_{\frac{n}{2}}^{2}+ \sum\limits_{k=1}^{n-3}(\sum\limits_{l=1}^{\text{min}(k,n-2-k)} q^{-\frac{3}{2}+l})hX_{n-1-k} +\sum\limits_{l=1}^{n-3}q^{-\frac{n-1-l}{2}}c_lF_{n-3-l}(X_\de)
$$
and
\begin{align*}
&q^{-\frac{1}{2}}X_1X_nX_\de\\
=&q^{\frac{n-3}{2}}X_{\frac{n}{2}}X_{\frac{n}{2}+1}X_\de+ \sum\limits_{k=1}^{n-2}(\sum\limits_{l=1}^{\text{min}(k,n-1-k)}q^{-1+l}) hX_{n-k}X_\de +\sum\limits_{l=1}^{n-2}q^{-\frac{n-1-l}{2}}c_lF_{n-2-l}(X_\de)X_\de \\
=&q^{\frac{n}{2}-2}X_{\frac{n}{2}}^{2}+q^{\frac{n}{2}}X_{\frac{n}{2}+1}^{2}+q^{\frac{n-1}{2}}hX_{\frac{n}{2}+1}+q^{\frac{n}{2}-1} +q^{\frac{n-3}{2}}hX_{\frac{n}{2}} +\sum\limits_{k=1}^{n-2}(\sum\limits_{l=1}^{\text{min}(k,n-1-k)}q^{-\frac{3}{2}+l}) hX_{n-1-k}\\ &+\sum\limits_{k=1}^{n-2}(\sum\limits_{l=1}^{\text{min}(k,n-1-k)}q^{-\frac{1}{2}+l}) hX_{n+1-k} +\sum\limits_{k=1}^{n-2}(\sum\limits_{l=1}^{\text{min}(k,n-1-k)}q^{-1+l}) h^{2} \\
&+\sum\limits_{l=1}^{n-2}q^{-\frac{n-1-l}{2}} c_lF_{n-2-l}(X_\de)X_\de.
\end{align*}

Note that
\begin{align*}
\left\{
\begin{aligned}
k\leq n-2-k,&~\text{if~}1\leq k\leq \frac{n}{2}-1,\\
k>n-2-k,&~\text{if~}\frac{n}{2}\leq k\leq n-3,\\
k<n-1-k,&~\text{if~}1\leq k\leq \frac{n}{2}-1,\\
k>n-1-k,&~\text{if~}\frac{n}{2}\leq k\leq n-3,\\
k< n-k,{\hskip 0.6cm}&~\text{if~}1\leq k\leq \frac{n}{2}-1,\\
k\geq n-k,{\hskip 0.6cm}&~\text{if~}\frac{n}{2}\leq k\leq n-1.
\end{aligned}
\right.
\end{align*}

It follows that
\begin{align*}
&\sum\limits_{k=1}^{n-2}(\sum\limits_{l=1}^{\text{min}(k,n-1-k)}q^{-\frac{3}{2}+l}) hX_{n-1-k} -\sum\limits_{k=1}^{n-3}(\sum\limits_{l=1}^{\text{min}(k,n-2-k)}q^{-\frac{3}{2}+l}) hX_{n-1-k}-q^{-\frac{1}{2}}hX_1 \\
=&\sum\limits_{k=\frac{n}{2}}^{n-3}q^{-\frac{5}{2}+n-k}hX_{n-1-k}=\sum\limits_{k=\frac{n}{2}+2}^{n-1}q^{-\frac{1}{2}+n-k}hX_{n+1-k}.
\end{align*}

Hence
\begin{align*}
&\sum\limits_{k=1}^{n-1}(\sum\limits_{l=1}^{\text{min}(k,n-k)}q^{-\frac{1}{2}+l})hX_{n+1-k}\\
=&\sum\limits_{k=1}^{n-2}(\sum\limits_{l=1}^{\text{min}(k,n-1-k)}q^{-\frac{3}{2}+l})hX_{n-1-k} +\sum\limits_{k=1}^{n-2}(\sum\limits_{l=1}^{\text{min}(k,n-1-k)}q^{-\frac{1}{2}+l})hX_{n+1-k}+q^{\frac{n-1}{2}}hX_{\frac{n}{2}+1}\\ &+q^{\frac{n-3}{2}}hX_{\frac{n}{2}} -\sum\limits_{k=1}^{n-3}(\sum\limits_{l=1}^{\text{min}(k,n-2-k)}q^{-\frac{3}{2}+l})hX_{n-1-k} -q^{-\frac{1}{2}}hX_1.
\end{align*}

Because
\begin{align*}
&q^{\frac{n}{2}-1}+ \sum\limits_{k=1}^{n-2}(\sum\limits_{l=1}^{\text{min}(k,n-1-k)}q^{-1+l})h^{2} +\sum\limits_{l=1}^{n-2}q^{-\frac{n-1-l}{2}}c_lF_{n-2-l}(X_\de)X_\de\\
&-\sum\limits_{l=1}^{n-3}q^{-\frac{n-1-l}{2}} c_lF_{n-3-l}(X_\de)\\
=&q^{\frac{n}{2}-1}+\sum\limits_{k=1}^{n-2}(\sum\limits_{l=1}^{\text{min}(k,n-1-k)}q^{-1+l})h^{2} +q^{-1}c_{n-3} +\sum\limits_{l=1}^{n-2}q^{-\frac{n-1-l}{2}} c_lF_{n-1-l}(X_\de),
\end{align*}
It suffices to prove that $q^{\frac{n}{2}-1}+\sum\limits_{k=1}^{n-2}(\sum\limits_{l=1}^{\text{min}(k,n-1-k)}q^{-1+l})h^{2} +q^{-1}c_{n-3}=c_{n-1}$.

We have that
$$
c_{n-1}=[\sum\limits_{i=1}^{\frac{n}{2}-1}b_i(q^{-(\frac{n}{2}-i)}+q^{\frac{n}{2}-i})+b_{\frac{n}{2}}]2h^2 +(q^{-(\frac{n}{2}-1)}+q^{-(\frac{n}{2}-3)}+\ldots+q^{\frac{n}{2}-3}+q^{\frac{n}{2}-1}),
$$
\begin{align*}
q^{-1}c_{n-3}=&\Big[\sum\limits_{i=1}^{\frac{n}{2}-2}b_i(q^{-(\frac{n}{2}-i)}+q^{\frac{n}{2}-2-i})+b_{\frac{n}{2}-1}q^{-1}\Big]2h^2 \\ &+(q^{-(\frac{n}{2}-1)}+q^{-(\frac{n}{2}-3)}+\ldots+q^{\frac{n}{2}-5}+q^{\frac{n}{2}-3})
\end{align*}
and $b_k-b_{k-2}=k-1$. Thus
$$
c_{n-1}-q^{-1}c_{n-3}-q^{\frac{n}{2}-1}=\Big[\sum\limits_{k=1}^{\frac{n}{2}}(k-1)q^{\frac{n}{2}-k}\Big]2h^2= \sum\limits_{k=1}^{n-2}(\sum\limits_{l=1}^{\text{min}(k,n-1-k)}q^{-1+l})h^2.
$$

Therefore
\begin{align*}
&q^{\frac{n}{2}-1}+ \sum\limits_{k=1}^{n-2}(\sum\limits_{l=1}^{\text{min}(k,n-1-k)}q^{-1+l})h^2 +q^{-1}c_{n-3} +\sum\limits_{l=1}^{n-2}q^{-\frac{n-1-l}{2}} c_lF_{n-1-l}(X_\de)\\
=&\sum\limits_{l=1}^{n-1}q^{-\frac{n-1-l}{2}} c_lF_{n-1-l}(X_\de)
\end{align*}
and $X_1X_{1+n}=q^{\frac{n}{2}}X_{\frac{n}{2}+1}^{2}+ \sum\limits_{k=1}^{n-1}(\sum\limits_{l=1}^{\text{min}(k,n-k)}q^{-\frac{1}{2}+l})hX_{n+1-k}+\sum\limits_{l=1}^{n-1} q^{-\frac{n-1-l}{2}} c_lF_{n-1-l}(X_\de)$.

When $n$ is odd and $n\geq 5$, we have
$$
X_1X_{n}=q^{\frac{n-1}{2}}X_{\frac{n+1}{2}}^{2}+ \sum\limits_{k=1}^{n-2}(\sum\limits_{l=1}^{\text{min}(k,n-1-k)}q^{-\frac{1}{2}+l})hX_{n-k} +\sum\limits_{l=1}^{n-2} q^{-\frac{n-2-l}{2}} c_lF_{n-2-l}(X_\de)
$$
and
\begin{align*}
&q^{-1}X_1X_{n-1}\\
=&q^{\frac{n-5}{2}}X_{\frac{n-1}{2}}X_{\frac{n+1}{2}} +\sum\limits_{k=1}^{n-3}(\sum\limits_{l=1}^{\text{min}(k,n-2-k)}q^{-\frac{3}{2}+l})hX_{n-1-k} +\sum\limits_{l=1}^{n-3} q^{-\frac{n-1-l}{2}} c_lF_{n-3-l}(X_\de).
\end{align*}
Then
\begin{align*}
&q^{-\frac{1}{2}}X_1X_{n}X_\de\\
=&q^{\frac{n}{2}-1}X_{\frac{n+1}{2}}(q^{-\frac{1}{2}}X_{\frac{n-1}{2}}+q^{\frac{1}{2}}X_{\frac{n+3}{2}}+h) +\sum\limits_{l=1}^{n-2} q^{-\frac{n-1-l}{2}} c_lF_{n-2-l}(X_\de)X_\de\\
&+\sum\limits_{k=1}^{n-2}(\sum\limits_{l=1}^{\text{min}(k,n-1-k)}q^{-1+l})h (q^{-\frac{1}{2}}X_{n-1-k}+q^{\frac{1}{2}}X_{n+1-k}+h) \end{align*}
\begin{align*}
=&q^{\frac{n-3}{2}}X_{\frac{n+1}{2}}X_{\frac{n-1}{2}}+q^{\frac{n-1}{2}}X_{\frac{n+1}{2}}X_{\frac{n+3}{2}} +q^{\frac{n}{2}-1}hX_{\frac{n+1}{2}} +\sum\limits_{k=1}^{n-2}(\sum\limits_{l=1}^{\text{min}(k,n-1-k)}q^{-\frac{3}{2}+l})hX_{n-1-k}\\ &+\sum\limits_{k=1}^{n-2}(\sum\limits_{l=1}^{\text{min}(k,n-1-k)}q^{-\frac{1}{2}+l})hX_{n+1-k} +\sum\limits_{k=1}^{n-2}(\sum\limits_{l=1}^{\text{min}(k,n-1-k)}q^{-1+l})h^2 \\ &+\sum\limits_{l=1}^{n-2}q^{-\frac{n-1-l}{2}}c_lF_{n-2-l}(X_\de)X_\de.
\end{align*}
Note that
\begin{align*}
\left\{
\begin{aligned}
k< n-2-k,&~\text{if~}1\leq k\leq \frac{n-3}{2},\\
k>n-2-k,&~\text{if~}\frac{n-1}{2}\leq k\leq n-3,\\
k\leq n-1-k,&~\text{if~}1\leq k\leq \frac{n-1}{2},\\
k>n-1-k,&~\text{if~}\frac{n+1}{2}\leq k\leq n-3,\\
k< n-k,{\hskip 0.6cm}&~\text{if~}1\leq k\leq \frac{n-1}{2},\\
k>n-k,{\hskip 0.6cm}&~\text{if~}\frac{n+1}{2}\leq k\leq n.
\end{aligned}
\right.
\end{align*}
Hence
\begin{align*}
&\sum\limits_{k=1}^{n-2}(\sum\limits_{l=1}^{\text{min}(k,n-1-k)}q^{-\frac{3}{2}+l})hX_{n-1-k} -\sum\limits_{k=1}^{n-3}(\sum\limits_{l=1}^{\text{min}(k,n-2-k)}q^{-\frac{3}{2}+l})hX_{n-1-k}\\
=&q^{-\frac{1}{2}}hX_1 +\sum\limits_{k=\frac{n+1}{2}}^{n-3}q^{n-\frac{5}{2}-k}hX_{n-1-k} +q^{\frac{n}{2}-2}hX_{\frac{n-1}{2}}\\
=&q^{-\frac{1}{2}}hX_1 +\sum\limits_{k=\frac{n-1}{2}}^{n-3}q^{n-\frac{5}{2}-k}hX_{n-1-k} =\sum\limits_{k=\frac{n+3}{2}}^{n}q^{n-\frac{1}{2}-k}hX_{n+1-k}.
\end{align*}
Note that
\begin{equation*}
q^{\frac{n}{2}-1}hX_{\frac{n+1}{2}}+\sum\limits_{l=1}^{\frac{n-3}{2}}q^{-\frac{1}{2}+l}hX_{\frac{n+1}{2}}= \sum\limits_{l=1}^{\frac{n-1}{2}}q^{-\frac{1}{2}+l}hX_{\frac{n+1}{2}}
\end{equation*}
and
\begin{equation*}
  \sum\limits_{k=\frac{n-1}{2}}^{n-3}q^{n-\frac{5}{2}-k}hX_{n-1-k}=\sum\limits_{k=\frac{n+3}{2}}^{n-1}q^{n-\frac{1}{2}-k}hX_{n-1-k},
\end{equation*}
then we obtain that
\begin{align*}
&q^{\frac{n}{2}-1}hX_{\frac{n+1}{2}} +\sum\limits_{k=1}^{n-2}(\sum\limits_{l=1}^{\text{min}(k,n-1-k)}q^{-\frac{3}{2}+l})hX_{n-1-k} +\sum\limits_{k=1}^{n-2}(\sum\limits_{l=1}^{\text{min}(k,n-1-k)}q^{-\frac{1}{2}+l})hX_{n+1-k}\\ &-\sum\limits_{k=1}^{n-3}(\sum\limits_{l=1}^{\text{min}(k,n-2-k)}q^{-\frac{3}{2}+l})hX_{n-1-k} -q^{-\frac{1}{2}}hX_1\\
=&\sum\limits_{k=1}^{n-1}(\sum\limits_{l=1}^{\text{min}(k,n-k)}q^{-\frac{1}{2}+l})hX_{n+1-k}.
\end{align*}
Since
\begin{align*}
&\sum\limits_{l=1}^{n-2}q^{-\frac{n-1-l}{2}}c_lF_{n-2-l}(X_\de)X_\de -\sum\limits_{l=1}^{n-3} q^{-\frac{n-1-l}{2}} c_lF_{n-3-l}(X_\de) \\
=&\sum\limits_{l=1}^{n-2}q^{-\frac{n-1-l}{2}}c_lF_{n-1-l}(X_\de)+q^{-1}c_{n-3},
\end{align*}
we only need to show that $q^{-1}c_{n-3}+\sum\limits_{k=1}^{n-2}(\sum\limits_{l=1}^{\text{min}(k,n-1-k)}q^{-1+l})h^2 =c_{n-1}$.

Note that $a_k-a_{k-2}=2k-1$ for $k\geq 3$, then
\begin{align*}
&c_{n-1}-q^{-1}c_{n-3}\\
=&\Big[(n-2)+(n-4)q+(n-6)q^2+\ldots+5q^{\frac{n-7}{2}}+3q^{\frac{n-5}{2}}+q^{\frac{n-3}{2}}\Big]h^2 \\
=&\sum\limits_{k=1}^{n-2}(\sum\limits_{l=1}^{\text{min}(k,n-1-k)}q^{-1+l})h^2.
\end{align*}
Therefore
$$
X_1X_{1+n}=q^{\frac{n-1}{2}}X_{\frac{n+1}{2}}X_{\frac{n+3}{2}} +\sum\limits_{k=1}^{n-1}(\sum\limits_{l=1}^{\text{min}(k,n-k)}q^{-\frac{1}{2}+l})hX_{n+1-k} +\sum\limits_{l=1}^{n-1} q^{-\frac{n-1-l}{2}} c_lF_{n-1-l}(X_\de).
$$
The proof is completed.
\end{proof}

\begin{remark}\label{multi-2}
According to \cite[Proposition 4.6]{BCDX} and Lemma \ref{bar},  all cluster variables and $F_{n}(X_\de)\ (n\in \mathbb{Z}_{>0})$ are bar-invariant. Therefore, the cluster multiplication formulas for $F_{n}(X_\delta) F_{m}(X_\delta)$, $F_{n}(X_\delta)X_m$ and $X_{m+n}X_m$ can be obtained by applying the bar-involution to all formulas in Theorem \ref{multi-1}.
\end{remark}

\section{A positive bar-invariant basis of $\A_{q}(2,2)$}

In this section, we will  explicitly construct a positive bar-invariant basis of $\A_{q}(2,2)$.

\begin{definition}
A basis of $\A_{q}(2,2)$ is called a positive $\mathbb{Z}[ {q}^{\pm \frac{1}{2}},h]$-basis if its
structure constants belong to $\mathbb{Z}_{\geq 0}[ {q}^{\pm \frac{1}{2}},h]$.
\end{definition}

Denote
$$\mathcal{B}=\{ q^{-\frac{a_1a_2}{2}}X^{a_1}_mX^{a_2}_{m+1}|m\in\mathbb{Z},(a_1,a_2)\in\mathbb{Z}_{\geq 0}^2 \} \sqcup \{ F_{n}(X_\delta) | n\in \mathbb{Z}_{>0}\}. $$

\begin{lemma}\label{bar-2}
All elements in $\mathcal{B}$ are bar-invariant.
\end{lemma}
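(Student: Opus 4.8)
I need to show every element of $\mathcal{B}$ is fixed by the bar-involution. The set splits into two families, and the second family is already handled: by Lemma \ref{bar}, each $F_n(X_\delta)$ is bar-invariant for $n \in \mathbb{Z}_{>0}$. So the entire content of the proof is the first family, the normalized monomials $q^{-\frac{a_1 a_2}{2}} X_m^{a_1} X_{m+1}^{a_2}$.

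The plan is to reduce the claim to a single quantum-commutation computation. Recall from \eqref{qcommutation} that $X(\mathbf{a})X(\mathbf{b}) = q^{\frac{1}{2}\Lambda(\mathbf{a},\mathbf{b})}X(\mathbf{a}+\mathbf{b})$, and the bar-involution satisfies $\overline{q^{\frac{r}{2}}X(\mathbf{a})} = q^{-\frac{r}{2}}X(\mathbf{a})$; in particular each standard generator $X(\mathbf{a})$ is itself bar-invariant. The key structural fact I would use is that any two consecutive cluster variables $X_m, X_{m+1}$ satisfy a quasi-commutation relation $X_m X_{m+1} = q^{c} X_{m+1} X_m$ for a fixed integer-or-half-integer exponent $c$ that does not depend on $m$ (this follows because the automorphism $\sigma$ shifts indices and fixes $q^{1/2}$, and because all the $X_m$ lie in the quantum torus $\mathcal{T}$ where commutation is governed by $\Lambda$). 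Concretely, for the initial pair one computes from $\Lambda = \bigl(\begin{smallmatrix} 0 & 1 \\ -1 & 0\end{smallmatrix}\bigr)$ that $X_m X_{m+1} = q\, X_{m+1} X_m$, and applying $\sigma$ shows the same exponent governs every consecutive pair.

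First I would establish the commutation exponent for a consecutive pair and record the general identity: if $X_m X_{m+1} = q\, X_{m+1} X_m$, then $X_m^{a_1} X_{m+1}^{a_2} = q^{a_1 a_2} X_{m+1}^{a_2} X_m^{a_1}$, obtained by moving each of the $a_1$ copies of $X_m$ past each of the $a_2$ copies of $X_{m+1}$. The normalizing prefactor $q^{-a_1 a_2/2}$ is chosen precisely so that the symmetrized product $q^{-a_1 a_2/2} X_m^{a_1} X_{m+1}^{a_2}$ equals a single torus element $X(\mathbf{v})$ up to the correct half-power; indeed, writing the product inside $\mathcal{T}$ and collecting the $q^{1/2}$-powers from repeated use of \eqref{qcommutation}, the total accumulated exponent is exactly $+\frac{1}{2} a_1 a_2 \Lambda$-contribution, so that $q^{-a_1a_2/2} X_m^{a_1} X_{m+1}^{a_2} = X(\mathbf{v})$ for the appropriate lattice vector $\mathbf{v}$. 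Since every $X(\mathbf{v})$ is bar-invariant by definition, the normalized monomial is bar-invariant.

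The step I expect to be the genuine obstacle is verifying cleanly that $q^{-a_1a_2/2} X_m^{a_1} X_{m+1}^{a_2}$ is a \emph{pure} torus element $X(\mathbf{v})$ with no leftover half-integer power of $q$, i.e. that the prefactor exactly cancels the commutation phase rather than off by a sign or a factor of two. This is where I would be most careful: I would verify the base case $m=0$ directly (using $X_0, X_1$ expressed in the $X(\mathbf{a})$-basis, as already done for $X_0$ in the proof of Theorem \ref{multi-1}), then invoke $\sigma$-invariance to propagate to all $m$, since $\sigma$ fixes $q^{1/2}$ and sends $X_m \mapsto X_{m+1}$, hence sends the normalized monomial at index $m$ to the one at index $m+1$ while preserving bar-invariance. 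Once the pure-torus-element identity is pinned down, bar-invariance is immediate from $\overline{X(\mathbf{v})} = X(\mathbf{v})$, completing both families and hence the lemma.
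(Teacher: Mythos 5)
Your reduction to the first family is fine (the $F_n(X_\delta)$ case is indeed just Lemma \ref{bar}), and you correctly identify the quasi-commutation $X_mX_{m+1}=qX_{m+1}X_m$ as the relevant structural input. But the central claim of your argument --- that $q^{-a_1a_2/2}X_m^{a_1}X_{m+1}^{a_2}$ equals a \emph{single} quantum torus element $X(\mathbf{v})$ --- is false for every $m$ outside the initial cluster. It holds for $m=1$ because $X_1=X(e_1)$ and $X_2=X(e_2)$ are generators of $\mathcal{T}$, but already $X_0=X(2,-1)+hX(1,-1)+X(0,-1)$ is a sum of three torus elements, so $q^{-a_1a_2/2}X_0^{a_1}X_1^{a_2}$ is a Laurent polynomial with many terms, not a pure $X(\mathbf{v})$. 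The base case $m=0$ that you propose to check directly would therefore fail, and the conclusion "bar-invariant because $X(\mathbf{v})$ is bar-invariant" does not apply. Your fallback --- propagating from one $m$ to all $m$ via $\sigma$ --- also assumes that $\sigma$ commutes with the bar-involution, which is plausible but is nowhere established and would itself require the nontrivial fact that the bar-involutions attached to different clusters agree on $\A_q(2,2)$.

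The fix is simpler than what you attempt, and is what the paper does: cite from \cite{BCDX} (Lemma 4.3 and Proposition 4.6 there) that $X_mX_{m+1}=qX_{m+1}X_m$ \emph{and} that each cluster variable is itself bar-invariant, $\overline{X_m}=X_m$. Since the bar-involution is a ring anti-automorphism of $\mathcal{T}$ (immediate from $\overline{X(\mathbf{a})X(\mathbf{b})}=q^{-\Lambda(\mathbf{a},\mathbf{b})/2}X(\mathbf{a}+\mathbf{b})=X(\mathbf{b})X(\mathbf{a})$), one gets
\begin{equation*}
\overline{q^{-\frac{a_1a_2}{2}}X_m^{a_1}X_{m+1}^{a_2}}
= q^{\frac{a_1a_2}{2}}X_{m+1}^{a_2}X_m^{a_1}
= q^{\frac{a_1a_2}{2}}\,q^{-a_1a_2}X_m^{a_1}X_{m+1}^{a_2}
= q^{-\frac{a_1a_2}{2}}X_m^{a_1}X_{m+1}^{a_2},
\end{equation*}
with no need to express anything in the $X(\mathbf{a})$-basis. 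The missing ingredient in your write-up is precisely the bar-invariance of the individual cluster variables $X_m$, which is what replaces your untenable "pure torus element" step.
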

\begin{proof}
According to   \cite[Lemma 4.3, Proposition 4.6]{BCDX}, the following equations hold  for any $m\in\mathbb{Z}$:
$$X_mX_{m+1}=qX_{m+1}X_{m},\ \overline{X_m}=X_m.$$
Thus, for any $m\in\mathbb{Z}$ and $(a_1,a_2)\in\mathbb{Z}_{\geq 0}^2$, we have
$$\overline{q^{-\frac{a_1a_2}{2}}X^{a_1}_mX^{a_2}_{m+1}}= q^{\frac{a_1a_2}{2}}X^{a_2}_{m+1}X^{a_1}_m=q^{-\frac{a_1a_2}{2}}X^{a_1}_mX^{a_2}_{m+1}$$
which assert  that all elements in the set
$\{ q^{-\frac{a_1a_2}{2}}X^{a_1}_mX^{a_2}_{m+1}|m\in\mathbb{Z},(a_1,a_2)\in\mathbb{Z}_{\geq 0}^2 \}$ are bar-invariant. Together with Lemma \ref{bar}, we know that any element in $\mathcal{B}$ is bar-invariant.
\end{proof}
 In order to prove that the elements in $\mathcal{B}$ are $\mathbb{Z}[ {q}^{\pm \frac{1}{2}},h]$-independent, we need  the following  definition which gives a partial order $\leq$ on $\mathbb{Z}^2$.
\begin{definition}\label{order}
 Let $(r_1,r_2)$ and  $(s_1,s_2)\in \mathbb{Z}^2$. If $r_i \leq s_i$ for each  $1\leq i \leq 2$, we write  $(r_1,r_2) \leq (s_1,s_2)$. Furthermore, if  $r_i < s_i$ for some $i$, we write $(r_1,r_2)< (s_1,s_2)$.
\end{definition}

\begin{theorem}\label{positive}
The set $\mathcal{B}$ is a positive bar-invariant   $\mathbb{Z}[ {q}^{\pm \frac{1}{2}},h]$-basis of $\A_{q}(2,2)$.
\end{theorem}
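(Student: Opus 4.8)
The plan is to prove that $\mathcal{B}$ is a positive bar-invariant $\mathbb{Z}[q^{\pm\frac{1}{2}},h]$-basis by establishing three separate properties: bar-invariance (already done in Lemma \ref{bar-2}), spanning, and linear independence, and finally positivity of the structure constants. Bar-invariance is already settled, so the real work is to show that $\mathcal{B}$ is an $\mathbb{Z}[q^{\pm\frac{1}{2}},h]$-basis and that the structure constants lie in $\mathbb{Z}_{\geq 0}[q^{\pm\frac{1}{2}},h]$. The positivity is essentially immediate from the cluster multiplication formulas in Theorem \ref{multi-1} together with Remark \ref{multi-2}: the hard analytic work of computing products was already carried out there, so once I know $\mathcal{B}$ is a basis, I only need to verify that every product of two basis elements, when re-expanded in terms of $\mathcal{B}$, has coefficients that are nonnegative polynomials in $q^{\pm\frac{1}{2}}$ and $h$. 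For this I would observe that a general element $q^{-a_1a_2/2}X_m^{a_1}X_{m+1}^{a_2}$ is a monomial in the quantum torus, and its product with another such monomial, or with an $F_n(X_\delta)$, is governed by the exchange relation $X_{k-1}X_{k+1}=qX_k^2+q^{\frac{1}{2}}hX_k+1$ and by formulas \eqref{equation1}--\eqref{equation2}, all of whose coefficients (the sums $\sum_l q^{-\frac{k+1}{2}+l}$, the $c_l$, etc.) are manifestly in $\mathbb{Z}_{\geq 0}[q^{\pm\frac{1}{2}},h]$.

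\emph{Spanning.} Since $\A_q(2,2)$ is generated as a $\mathbb{Z}[q^{\pm\frac{1}{2}}]$-algebra by all cluster variables $\{X_k\mid k\in\mathbb{Z}\}$, it suffices to show that every product of cluster variables can be rewritten as a $\mathbb{Z}[q^{\pm\frac{1}{2}},h]$-linear combination of elements of $\mathcal{B}$. The key reduction tool is Theorem \ref{multi-1}(3), which rewrites any product $X_mX_{m+n}$ of two cluster variables with index gap $n\geq 2$ in terms of a product $X_{\lfloor m+n/2\rfloor}X_{\lceil m+n/2\rceil}$ of two \emph{adjacent or equal-index} variables (gap $0$ or $1$), plus lower terms involving single cluster variables and the $F_{n-1-l}(X_\delta)$. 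Iterating, any monomial in the $X_k$ collapses to a combination of normalized products $q^{-a_1a_2/2}X_m^{a_1}X_{m+1}^{a_2}$ in consecutive variables, together with the $F_n(X_\delta)$; formula \eqref{equation1} handles reduction of products of Chebyshev terms, and \eqref{equation1}(2) handles mixed products $X_mF_n(X_\delta)$. Thus $\mathcal{B}$ spans $\A_q(2,2)$ over $\mathbb{Z}[q^{\pm\frac{1}{2}},h]$.

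\emph{Linear independence.} This is where I expect the main obstacle, and it is also where the partial order of Definition \ref{order} enters. I would embed everything into the quantum torus $\mathcal{T}=\mathbb{Z}[q^{\pm\frac{1}{2}}][X_1^{\pm1},X_2^{\pm1}]$, where each cluster variable $X_k$ and each $F_n(X_\delta)$ has an explicit Laurent expansion in $X_1,X_2$ (as computed for $X_0,X_3,X_4$ and $X_\delta$ above). The strategy is to show that distinct basis elements have distinct leading terms with respect to the order $\leq$ on the exponent lattice $\mathbb{Z}^2$: each normalized monomial $q^{-a_1a_2/2}X_m^{a_1}X_{m+1}^{a_2}$ has a well-defined leading (maximal or minimal) exponent vector in $\mathbb{Z}^2$ determined by $m$, $a_1$, $a_2$, and each $F_n(X_\delta)$ has leading term governed by the highest power of $X_\delta=X(-1,-1)+\cdots+X(1,-1)$. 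Since $F_n(X_\delta)$ is symmetric (bar-invariant) and contains the extreme term $X(-n,-n)$ and $X(n,-n)$-type contributions, its support is distinguishable from that of any single normalized monomial. I would argue that a nontrivial $\mathbb{Z}[q^{\pm\frac{1}{2}},h]$-linear relation among elements of $\mathcal{B}$, pushed into $\mathcal{T}$, would force the coefficient of a maximal leading term to vanish, and by descending induction on the order $\leq$ all coefficients vanish.

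\emph{Positivity.} Finally, with $\mathcal{B}$ established as a basis, positivity of the structure constants follows by inspection: rewrite each product of two elements of $\mathcal{B}$ using Theorem \ref{multi-1} and Remark \ref{multi-2}, and observe that every coefficient appearing on the right-hand sides of \eqref{equation1}, \eqref{equation1}(2) and \eqref{equation2}—the geometric-type sums $\sum_{l} q^{-\frac{k+1}{2}+l}$, the constants $q^{\lfloor n/2\rfloor}$, and the $c_l$ built from $a_i=\frac{i(i+1)}{2}$ and $b_i$ together with symmetric Laurent sums $q^{-j}+q^{j}$ and factors $h^2$—lies in $\mathbb{Z}_{\geq 0}[q^{\pm\frac{1}{2}},h]$. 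The only subtlety is ensuring that after re-normalizing products $X_mX_{m+1}$ back into the form $q^{-a_1a_2/2}X_m^{a_1}X_{m+1}^{a_2}$ no negative sign is introduced, which is exactly guaranteed by the commutation relation $X_mX_{m+1}=qX_{m+1}X_m$ recorded in Lemma \ref{bar-2}. This completes the verification that $\mathcal{B}$ is a positive bar-invariant $\mathbb{Z}[q^{\pm\frac{1}{2}},h]$-basis.
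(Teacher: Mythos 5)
Your overall route coincides with the paper's: bar-invariance via Lemma \ref{bar-2}, spanning by iterating the multiplication formulas of Theorem \ref{multi-1} and Remark \ref{multi-2} to collapse any product onto normalized monomials in adjacent cluster variables plus Chebyshev terms, linear independence via extremal terms of the Laurent expansions in $\mathcal{T}$ with respect to the partial order of Definition \ref{order}, and positivity by inspection of the coefficients in \eqref{equation1}--\eqref{equation2}. The spanning and positivity parts of your sketch are fine and, if anything, slightly more explicit than the paper's.

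The soft spot is the linear independence step, and it is not merely cosmetic. First, you hedge between ``maximal or minimal'' leading exponent vectors, but with respect to the componentwise order on $\ZZ^2$ maximal terms need not be unique: already $X_\delta=X(-1,-1)+hX(-1,0)+hX(0,-1)+X(-1,1)+X(1,-1)$ has two incomparable maximal exponents $(-1,1)$ and $(1,-1)$, while its minimal exponent $(-1,-1)$ is unique. You must commit to \emph{minimal} terms for the triangularity argument to run. Second, the crux of the argument --- that the map sending each element of $\mathcal{B}$ to its minimal exponent vector is injective (indeed a bijection onto $\ZZ^2$) --- is asserted in your proposal but not established; saying that the leading exponent of $q^{-a_1a_2/2}X_m^{a_1}X_{m+1}^{a_2}$ is ``determined by $m,a_1,a_2$'' does not rule out collisions between different values of $m$, nor between such monomials and the $F_n(X_\delta)$. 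The paper closes this by computing the minimal term of \emph{every} cluster variable (for $n\geq 2$ the variable $X_n$ has minimal term $a_nX^{-(n-2,n-3)}$, and for $n\geq -1$ the variable $X_{-n}$ has minimal term $b_nX^{-(n,n+1)}$, obtained from $X_nX_\delta=q^{1/2}X_{n+1}+q^{-1/2}X_{n-1}+h$), matching these exponents with the almost positive roots of $\hat{\mathfrak{sl}}_2$, and then invoking the Sherman--Zelevinsky argument to get the bijection with $\ZZ^2$. You would need to supply this computation (or an equivalent one) for your descending-induction argument to be complete.
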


\begin{proof}
According to  Theorem~\ref{multi-1} and Remark~\ref{multi-2}, we can deduce
that the generalized quantum cluster algebra  $\A_{q}(2,2)$ is $\mathbb{Z}[ {q}^{\pm \frac{1}{2}},h]$-spanned by  the elements in $\mathcal{B}$.

Note that $X_\delta$ has the minimal non-zero term $X^{(-1,-1)}$ associated to the partial order in Definition \ref{order}, and thus by Theorem \ref{multi-1}, we deduce that the element $F_{n}(X_\delta)$ has the minimal non-zero term $X^{(-n,-n)}$ for each $n\in\mathbb{Z}_{>0}$.
According to  Theorem \ref{multi-2},  we have $X_nX_\delta=q^{\frac{1}{2}}X_{n+1}+q^{-\frac{1}{2}}X_{n-1}+h$. Thus, for each $n\geq 2,$ we obtain that the cluster variable $X_n$ has the minimal non-zero term $a_nX^{-(n-2,n-3)}$ where $a_n\in\mathbb{Z}[ {q}^{\pm \frac{1}{2}}]$, and for each $n\geq -1$,  the cluster variable $X_{-n}$ has the minimal non-zero term $b_nX^{-(n,n+1)}$ where $b_n\in\mathbb{Z}[ {q}^{\pm \frac{1}{2}}]$. Hence, there exists a bijection between the set of all minimal non-zero terms in cluster variables  and  almost positive roots associated to the affine Lie algebra $\hat{\mathfrak{sl}}_2$.  Using the same discussion as  \cite[Proposition 3.1]{SZ}, we have that there exists a bijection between the set of all minimal non-zero terms in the elements in $\mathcal{B}$ and  $\mathbb{Z}^2$, which implies that the elements in  $\mathcal{B}$ are $\mathbb{Z}[ {q}^{\pm \frac{1}{2}}]$-independent.

By using Theorem~\ref{multi-1} and Remark \ref{multi-2} repeatedly,   we can deduce that  the
structure constants of the basis elements in $\mathcal{B}$ belong to $\mathbb{Z}_{\geq 0}[ {q}^{\pm \frac{1}{2}},h]$. Together with Lemma \ref{bar-2}, the proof is completed.
\end{proof}
\begin{remark}\label{special}
If we set $h=0$ and $q=1$, then the set $\mathcal{B}$ is exactly the canonical basis of  the cluster algebra of Kronecker quiver obtained  in \cite{SZ}.
\end{remark}

\begin{definition}
An element in $\A_{q}(2,2)$ is called positive if the coefficients of its Laurent expansion associated to any cluster belong to
$\mathbb{Z}_{\geq 0}[ {q}^{\pm \frac{1}{2}},h]$.
\end{definition}

\begin{remark}\label{positive-element}
According to  Theorem \ref{multi-1} and Remark \ref{multi-2} or using the same arguments as \cite[Corollary 8.3.3]{fanqin1}, it is not difficult to see that every element in
$\mathcal{B}$ is positive. In particular, we obtain that all cluster variables of $\A_{q}(2,2)$ are positive, which is a special case in \cite{rupel}.
\end{remark}

\section*{Acknowledgments}

Liqian Bai was supported by NSF of China (No. 11801445) and the Natural Science Foundation of Shaanxi Province (No. 2020JQ-116), Ming Ding was supported by NSF of China (No. 11771217) and Guangdong Basic and Applied Basic Research
Foundation (2023A1515011739) and Fan Xu was supported by NSF of China (No. 12031007).

\end{document}